\def\acts{\lefttorightarrow}
 \newtheorem{thm}{Theorem}
 \newtheorem{prop}[thm]{Proposition}
 \newtheorem{cor}[thm]{Corollary}
 \newtheorem{lem}[thm]{Lemma}
 \newtheorem{conj}[thm]{Conjecture}
 \newtheorem*{bthm}{Theorem}
\theoremstyle{definition}
\newtheorem*{defn}{Definition}
\theoremstyle{remark}
\newtheorem{rem}[thm]{Remark}
\newcommand{\Z}{\mathbb{Z}}
\newcommand{\Q}{\mathbb{Q}}
\newcommand{\C}{\mathbb{C}}
\newcommand{\F}{\mathbb{F}}
\renewcommand{\Im}{\mathrm{Im}}
\newcommand{\cwe}{\mathrm{cwe}}
\newcommand{\Tr}{\mathrm{Tr}}
\newcommand{\GL}{\mathrm{GL}}
\newcommand{\Sp}{\mathrm{Sp}}
\renewcommand{\(}{\left(} \renewcommand{\)}{\right)}
\newcommand{\T}[1]{\,{}^t\! {{#1}}} 
\begin{document}
\title{On an analogue of the doubling method \linebreak in coding theory}
\keywords{doubling method, self-dual codes, weight enumerators, Clifford-Weil group}

\author{Thanasis Bouganis and Jolanta Marzec-Ballesteros}
\address{Department of Mathematical Sciences, Durham University,
 Durham, UK.}
 \email{athanasios.bouganis@durham.ac.uk}
\address{Faculty of Mathematics and Computer Science, Adam Mickiewicz University, Pozna\'n, Poland}
\email{jmarzec@amu.edu.pl}
\subjclass[2020]{94B05, 11F55}

\begin{abstract}
It is well known that there is a deep relationship between codes and lattices. Concepts from coding theory are related to concepts of lattice theory as, for example, weight enumerators to theta series, MacWilliams identity to Jacobi identity, and Gleason's theorem to Hecke's theorem. In this framework, higher-genus (or multiple) weight enumerators are related to Siegel theta series, which opens up the possibility of introducing concepts from the theory of higher-rank modular forms to coding theory. There has been important work in this direction, for example Runge introduced a coding theory analogue of Siegel's $\Phi$-operator and Nebe analogues of Hecke operators. In this paper, we show that the celebrated Doubling Method from the theory of higher-rank modular forms has a coding theory analogue. Given the impact that the Doubling Method has had in the study of higher-rank modular forms, one may expect that its analogue may prove useful to the study of higher-genus weight enumerators. In this paper we use it to solve an analogue of the ``basis problem''. That is, we express ``cuspidal'' polynomials which are invariant  under a Clifford-Weil type group as an explicit linear combination of higher-genus weight enumerators of self-dual codes of that type.  
\end{abstract}

\maketitle

\section{Introduction}
It is now a well established fact that there is a deep connection between the theory of lattices and self-dual codes. Indeed, many concepts and theorems in one theory have analogues in the other. For example, an analogue of the celebrated theorem of Hecke on the generators of the ring of modular forms is the theorem of Gleason in the theory of self-dual codes and their weight distribution. 
We refer to the preface of the book \cite{NRS06}, and especially the table on page vii, for a list of analogies between the two theories. As it is indicated in that table, the analogies between the two theories go beyond an analogy between classical (or elliptic) modular forms and the (Hamming-)weight distribution of self-dual codes. In particular, using the concept of higher-order weight enumerators one may extend the analogy to include higher rank modular forms such as Siegel modular forms. The main aim of this paper is to find an analogue of the doubling method in the theory of error-correcting codes. Along the way we will emphasize some of the analogies alluded to above.

The doubling method has been a powerful tool in the theory of higher-rank modular forms (or even more generally, automorphic forms) leading to many important applications in the study of $L$-values, decomposition of Eisenstein series and the basis problem, to name a few. The method was initiated in the 1980's by Garrett (in the classical setting) and by Piatetski-Shapiro and Rallis (in the setting of automorphic representations), and since then has been vastly generalised. We will not attempt to give a detailed exposition of the doubling method in this paper, but we do include a short appendix which may serve as a very basic introduction. The appendix is mainly intended for readers of coding theoretic background.

In this article we are only interested in self-dual linear codes over a finite field $\F_q$. For more precise information and for explanations of the notions used below, we refer the reader to section \ref{background}. To a code $C$, for any integer $g\geq 1$, one can associate a homogeneous polynomial of $q^g$ variables - a genus $g$ weight enumerator $\cwe_g(C, (x_v)_{v\in\F_q^g})$. It is known that, for a fixed $g$ and for the codes of a fixed type, the $\C$-algebra generated by these polynomials is invariant under the action of a Clifford-Weil group $\mathcal{C}_g$ (whose definition depends on the type of the codes). 
This algebra may be thought of as an analogue of the space of Siegel modular forms of genus $g$ (of suitable weights and level). Based on this analogy we may speak of Eisenstein series and cusp forms. Of special interest to us is the following homogeneous polynomial, which can be thought of as the coding theory analogue of an Eisenstein series of Siegel type,
$$E_g((x_v)_{v\in\F_q^g})=\frac{1}{|\mathcal{P}_g\backslash\mathcal{C}_g|}\sum_{\sigma\in \mathcal{P}_g\backslash \mathcal{C}_g} \left(\sum_{v\in\F_q^g} x_v^N\right)^\sigma.$$ 
In this paper, we will calculate the inner product of such an ``Eisenstein series'' (restricted to a domain of the form $\mathcal{F}\times\mathcal{F}$) and a function (on a domain $\mathcal{F}$), which is an eigenfunction of certain averaging operators, and show that it is equal to a constant multiple of that function. In the theory of modular forms, the resulting constant is of great arithmetic interest as it is related to the aforementioned eigenvalues. Our main result shows that in the context of coding theory, the role of these functions is played by cusp forms whose genus is half the genus of the Eisenstein series. We will prove the following. 

\begin{bthm}
Let $f\in\mathbb{C}[y_v: v\in\F_q^g]$ be a homogeneous polynomial of degree $N$ which is invariant under the action of a Clifford-Weil group $\mathcal{C}_g$. If $\mathcal{C}_g$ is associated with codes of type $2^E_{II}$, $q_1^E$ or $q^E$ and $f$ is a cusp form of genus $g$, then
\[
\left( D(E_{2g})(\underline{x},\underline{y}),f(\underline{y})\right)_g = cN! \bar{f}(\underline{x}),
\]
where the constant 
\[
c=\begin{cases}
2^{g^2+2g-Ng/2}\prod_{i=1}^g\frac{2^i-1}{2^{g+i}+1} , & \mbox{for type } 2^E_{II}\\
q^{g^2+2g-Ng/2}\prod_{i=1}^g\frac{q^i-1}{q^{g+i}+1} , & \mbox{for type } q^E_{1}\\
q^{g^2-Ng/2}\prod_{i=1}^g\frac{q^i-1}{q^{g+i}+1} , & \mbox{for type } q^E.
\end{cases}
\]
is independent of $f$. Here $\bar{f}(\underline{x})$ denotes the polynomial obtained from $f(\underline{x})$ by applying complex conjugation to the coefficients. 
\end{bthm}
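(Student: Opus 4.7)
The strategy mirrors the classical doubling method: I would decompose the restricted Eisenstein series $D(E_{2g})(\underline x,\underline y)$ according to the double cosets $\mathcal{P}_{2g}\backslash\mathcal{C}_{2g}/(\mathcal{C}_g\times\mathcal{C}_g)$, show that only the ``open'' double coset survives the pairing against a cusp form $f$, and identify the contribution of that open cell with a reproducing kernel on the cuspidal subspace of genus-$g$ $\mathcal{C}_g$-invariants. Pairing then immediately evaluates $f$ (up to a constant), yielding $\bar f(\underline{x})$.

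The first step is to parameterise these double cosets by a rank-type invariant $r\in\{0,1,\dots,g\}$ extracted from the ``lower-left'' component of a Clifford--Weil representative, in analogy with the standard stratification of $P\backslash\Sp_{4g}/(\Sp_{2g}\times\Sp_{2g})$. Writing $D(E_{2g})=\sum_{r=0}^{g}K_{r}(\underline x,\underline y)$ accordingly, I would then show that each degenerate stratum $K_r$ with $r<g$ is supported on a proper parabolic of $\mathcal{C}_{2g}$ and hence lies in the image of (the genus $g$ adjoint of) Runge's Siegel $\Phi$-operator. Because $f$ is cuspidal, $\Phi(f)=0$ for every such parabolic, and a standard adjointness argument for $(\cdot,\cdot)_g$ forces $(K_r,f)_g=0$ for all $r<g$. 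This is the coding-theoretic counterpart of the vanishing of degenerate Fourier--Jacobi contributions in the automorphic version of the doubling method and is the place where the cuspidality hypothesis is used.

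Next, I would compute the open-cell kernel $K_g$ explicitly. A distinguished representative of the open double coset acts on the defining sum $\sum_{v\in\F_q^{2g}}x_v^N$ of $E_{2g}$ by identifying $v\in\F_q^{2g}$ with a pair $(u,u)\in\F_q^g\oplus\F_q^g$ lying on a diagonal; after applying $D$ and averaging over $\mathcal{C}_g\times\mathcal{C}_g$, one expects, up to a scalar $\lambda$ depending only on the index $|\mathcal{P}_{2g}\backslash\mathcal{C}_{2g}|$ and the size of the open double coset, a formula of the shape
\[
K_g(\underline x,\underline y)=\lambda\sum_{(\sigma,\tau)\in\mathcal{C}_g\times\mathcal{C}_g}\Bigl(\sum_{u\in\F_q^g}x_u\,y_u\Bigr)^{N(\sigma,\tau)}.
\]
Multinomial expansion of the $N$-th power produces the factor $N!$ and realises $K_g$ as the canonical reproducing kernel for $(\cdot,\cdot)_g$ on the degree-$N$ $\mathcal{C}_g$-invariants, so that $(K_g,f)_g=\lambda\,N!\,\bar f(\underline x)$.

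The main obstacle is two-fold. Conceptually, one must develop the $\Phi$-operator formalism for each of the three Clifford--Weil types $2^E_{II}$, $q^E_1$, $q^E$ in enough precision to justify the orthogonality $(K_r,f)_g=0$ for $r<g$; this is the real content of the proof. Computationally, one must identify the scalar $c=\lambda$ explicitly. The stated form $c=q^{*}\prod_{i=1}^g(q^i-1)/(q^{g+i}+1)$ (and its analogue for type $2^E_{II}$) is recognisable as the ratio of orders of symplectic/orthogonal groups over $\F_q$, which is precisely what arises when one enumerates maximal totally isotropic subspaces in the quadratic space attached to the pair $(\mathcal{C}_{2g},\mathcal{C}_g\times\mathcal{C}_g)$. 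Carrying out this enumeration uniformly and specialising to each of the three types, while tracking the type-dependent normalisations in $D$, in $(\cdot,\cdot)_g$, and in the parabolic $\mathcal{P}_{2g}$, should yield the three stated formulas for $c$.
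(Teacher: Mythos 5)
Your strategy is essentially the paper's: the same double coset decomposition of $\mathcal{P}_{2g}\backslash\mathcal{C}_{2g}/\Delta(\mathcal{C}_g\times\mathcal{C}_g)$ indexed by a rank $r\in\{0,\dots,g\}$ (obtained in the paper by lifting Shimura's decomposition of $P_{2g}\backslash\Sp_{4g}/(\Sp_{2g}\times\Sp_{2g})$ to the hyperbolic co-unitary group), vanishing of the strata with $r<g$ against a cusp form, the reproducing kernel $\bigl(\sum_{w}x_w y_w\bigr)^N$ on the open cell with the factor $N!$ coming from the multinomial theorem, and the constant extracted from orders of the relevant finite classical groups.

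One step of your sketch needs more than the ``standard adjointness argument'' you invoke, and it is precisely where the paper inserts an extra lemma. The degenerate stratum $K_r$, viewed as a polynomial in $\underline{y}$, is a sum $\sum_{\mathbf{w}\in\F_q^{g-r}}\phi_{g,g-r}^{(\mathbf{w})}(P)$ over lifts with \emph{arbitrary} tails $\mathbf{w}$, not only the zero tail; adjointness therefore produces the generalized operators $\Phi_{g,g-r}^{(\mathbf{w})}$ applied to $f$, whereas the definition of a cusp form only gives $\Phi_{g,1}(f)=0$, i.e.\ orthogonality to the image of the zero-tail lift. The paper closes this by showing $\Phi_{g,j}^{(\mathbf{w})}(f)=0$ for all $\mathbf{w}$, using that $f$ is invariant under the translations in the parabolic of $\mathcal{C}_g$: a surviving term with tail $w\neq 0$ could be translated back to contradict $\Phi_{g,1}(f)=0$. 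Without this observation your vanishing argument for $r<g$ does not close. The remainder of your outline matches the paper: the open cell collapses to the diagonal $\sum_u x_u y_u$ once the character twist and the Fourier transform $h_{1_g,1_g,1_g}$ produced by the representative $\boldsymbol{\tau}_g$ are absorbed into $f$ by $\mathcal{C}_g$-invariance, and the scalar $c$ is assembled from $|\mathfrak{U}_g|$ and $|\mathcal{P}_{2g}\backslash\mathcal{C}_{2g}|$, which accounts for the type-dependent powers of $q$ through $|\ker\lambda^{(g)}|$.
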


In the above theorem, $D(E_{2g})$ denotes the aforementioned restriction of the Eisenstein series $E_{2g}$ from the space $\F^{2g}_q$ to the space $\F^{g}_q\times\F^{g}_q$, and $\underline{x},\underline{y}\in\F_q^g$ (see the beginning of section \ref{sec:doubling-method} for the definition of this map); the inner product $\left(\; ,\,\right)_g$ is defined in section \ref{sec:cusp forms}.
At the end of section \ref{sec:r=g} we formulate a conjecture regarding a similar statement for the codes of type $2^E_I$; it is verified in a couple of cases in section \ref{sec:computation}. The proof of the theorem is carried out throughout sections \ref{sec:double-coset-decomp}-\ref{sec:r=g}, following the strategy described in the appendix.

As we mentioned above, the constant $c$ appearing in the theorem should be related to eigenvalues of $f$ with respect to the action of certain averaging operators. The candidates for such operators could be Hecke operators introduced in \cite{N09} or Kneser-Hecke operators described in \cite{N06}. In fact, as shown in \cite[Corollary 19]{N09}, the eigenvalues of these operators are related to each other. As it is the case for our constant $c$, these eigenvalues depend on $\frac{N}{2}$, the genus $g$ and are constant on the spaces of cusp forms of a given genus. It would be interesting to see if they are related to the constant we obtain here.

In the case of codes of type $2^E_{II}$ and type $p_1^E$, with $p$ a prime number, it is possible (see \cite{NRS01}) to express the Eisenstein series $E_{2g}$ as a sum of genus-$2g$ complete weight enumerators of all the codes of fixed type and length. From this and our main theorem it immediately follows that any cusp form $f$ of genus $g$ has a Fourier-type expansion in terms of genus-$g$ complete weight enumerators of all the codes of the same type and length:
\[
f = \frac{b}{cN!} \sum_C \left(f, \cwe_g(C)\right)_g \cwe_g(C);
\]
the constant $b$ is given in the Corollary in section \ref{sec:basis problem}.

We close this introductory section by mentioning that in the theory of higher rank modular forms the doubling method can be extended to the case where the Siegel-type Eisenstein series is restricted to a product of two different domains (in the notation above $\mathcal{F}_1 \times \mathcal{F}_2$). For example, in the theory of Siegel modular forms this corresponds to an embedding of the form $\Sp_{n} \times \Sp_m \hookrightarrow \Sp_{n+m}$; the case discussed in this paper is for $n=m$. In this more general setting, the doubling method furnishes Klingen-type Eisenstein series. These do have an analogue in the coding theory (liftings of cusp forms) and we believe that our techniques here can be generalized to include also this analogy. We defer this to a future work.

\section{Background and main definitions}\label{background}
This article concerns linear codes over a finite field $\F$, that is, vector spaces $C\subset\F^N$. In characteristic $2$ we assume that $\F=\F_2$, so that the codes are binary. We equip $\F^N$ with the bilinear form $\beta^{(N)}(x ,y) = \sum_{i=1}^N \beta (x_i, y_i)$, where $\beta :\F\times\F\to \Q/\Z$ is specified below, and assume that $C$ is self-dual with respect to $\beta^{(N)}$, that is, $C$ is equal to its dual code
$$C^{\perp}=\{ c'\in\F^N:\forall_{c\in C}\, \beta^{(N)} (c',c)=0\} .$$
Then $N$ is an even natural number and the code $C$ is $N/2$-dimensional. According to the notation of \cite{NRS06}, we consider the codes of the following types:
\begin{itemize}
\item $2_I^E$: self-dual binary codes equipped with a bilinear map $\beta :\F_2\times\F_2\to\frac12\Z/\Z$, $\beta (x,y)=\frac12 xy$ and quadratic forms $\Phi=\{\varphi :x\mapsto\frac12 x^2,0\}$;
\item $2_{II}^E$: doubly-even self-dual binary codes equipped with a bilinear map $\beta$ as above and quadratic forms $\Phi=\{\phi :x\mapsto\frac14 x^2,2\phi ,3\phi ,0\}$; a binary code $C$ is doubly-even if for every codeword $c=(c_1,\ldots , c_N)\in C$, the weight $|\{ i:c_i\neq 0\}|$ is divisible by $4$; 
\item $q^E$: self-dual codes over a field $\F_q$ with $q=p^f$ odd, equipped with a bilinear map\\ $\beta :\F_q\times\F_q\to\frac1p\Z/\Z$, $\beta(x,y)=\frac1p \Tr(xy)$ and quadratic forms $\Phi=\{\varphi_a :x\mapsto\frac1p\Tr(ax^2)| a\in\F_q\}$; 
\item $q_1^E =\{ C\in q^E: (1,\ldots ,1)\in C\}$: equipped with  $\beta :\F_q\times\F_q\to\frac1p\Z/\Z$, $\beta(x,y)=\frac1p \Tr(xy)$ and $\Phi=\{\varphi_{a,b} :x\mapsto\frac1p\Tr(ax^2+bx)| a,b\in\F_q\}$;
\end{itemize}
where $\Tr$ denotes the trace of $\F_q$ over the field $\F_p$.

\subsection{Complete weight enumerators and the associated  Clifford-Weil groups}\label{sec:Clifford}
The genus-$g$ complete weight enumerator of a code $C$ of length $N$ is a homogeneous polynomial in $\C[x_v: v\in \F^g]$ of degree $N$:
$$\cwe_g(C)=\sum_{\underline{c}\in C^g}\prod_{v\in\F^g} x_v^{a_v(\underline{c})} ,$$
where $a_v(\underline{c})$ is the number of occurrences of $v$ as a row in the $N\times g$ matrix of column vectors defined by $\underline{c}$. Runge \cite{Runge96} (for the types $2_I^E$ and $2_{II}^E$) and Nebe, Rains, Sloane \cite[Corollary 5.7.6]{NRS06} (in much greater generality) proved that as $C$ varies over codes of one of the types above, the weight enumerators $\cwe_g(C)$ span the space of invariants of degree $N$ in $\C[x_v: v\in \F^g]$ under the action of the associated Clifford-Weil group $\mathcal{C}_g$. The group $\mathcal{C}_g$ 
is a subgroup of $\GL_{|\F|^g}(\C)$ generated by the following elements:
$$m_u: x_v\mapsto x_{uv},\qquad u\in\GL_g(\F)$$
\begin{equation}\label{generators}
d_\phi : x_v\mapsto \exp(2\pi i \phi(v))x_v,\qquad \phi\in\Phi^{(g)}
\end{equation}
$$h_{\iota,u_\iota,v_\iota}: x_v\mapsto \frac{1}{\sqrt{|\iota\F^g|}}\sum_{w\in \iota\F^g}\exp( 2\pi i\beta^{(g)}(w,v_\iota v)) x_{(1-\iota)v+w} ,$$
where $\exp t=e^t$, $\iota=u_\iota v_\iota$ varies over symmetric idempotents\footnote{We say that $\iota\in M_g(\F)$ is a symmetric idempotent if there exists an isomorphism $\kappa: \iota  M_g(\F)\to\T{\iota} M_g(\F)$. Any such $\kappa$ is defined as $\kappa (\iota x)=v_\iota x$, $\kappa^{-1}(\T{\iota}x)=u_\iota x$, where $u_\iota\in \iota M_g(\F)\T{\iota}$ and $v_\iota \in\T{\iota}  M_g(\F)\iota$ satisfy $u_\iota v_\iota=\iota$, $v_\iota u_\iota=\T{\iota}$.} in $M_g(\F)$, and the superscript $(g)$ means that $\phi\in\Phi^{(g)}$ is a form on $\F^g$ and 
$$\Phi^{(g)} =\left\{\begin{pmatrix} \phi_1 & m_{12} & \ldots & m_{1g}\\
 & \ddots & \ddots & \vdots\\ 
  & & \ddots & m_{g-1,g}\\ & & & \phi_g
\end{pmatrix}: \phi_i\in\Phi, m_{ij}\in \F\right\}$$
with 
$$\begin{pmatrix} \phi_1 & m_{12} & \ldots & m_{1g}\\
 & \ddots & \ddots & \vdots\\ 
  & & \ddots & m_{g-1,g}\\ & & & \phi_g
\end{pmatrix} (v_1,\ldots ,v_g)=\sum_{i=1}^g \phi_i(v_i) +\sum_{1\leq i<j\leq g} \beta( v_i,m_{ij}v_j) .$$
Note that $m_u$ is given by a permutation matrix, and $d_\phi$ by a diagonal matrix whose entries are $\pm 1$ (type $2^E_I$ codes), $\pm 1$ or $\pm i$ (type $2^E_{II}$ codes), $p$-th roots of unity (types $q^E$ and $q_1^E$).

As it is shown in \cite[Theorem 5.3.2]{NRS06}, the Clifford-Weil group $\mathcal{C}_g$ is the projective image of a hyperbolic co-unitary group (in the terminology and notation used there) $\mathfrak{U}(M_g(\F),\Phi^{(g)})$ under a projective representation $\pi : \mathfrak{U}(M_{g}(\F),\Phi^{(g)}) \rightarrow PGL_{|\F|^g} (\mathbb{C})$, that is 
$$\mathcal{C}_g \cong Z. \mathfrak{U}(M_g(\F), \Phi^{(g)}),$$
where
\begin{equation}\label{eq:iso of U}
\mathfrak{U}(M_g(\F),\Phi^{(g)})\cong (\ker\lambda^{(g)}\oplus\ker\lambda^{(g)} ).\mathcal{G}_g.
\end{equation}
We write $A.B$ to denote a group whose normal subgroup is isomorphic to $A$ and whose quotient is isomorphic to $B$. In the case of odd characteristic the operation $A.B$ is actually $A \rtimes B$, the semidirect product with the normal subgroup $A$ (see \cite[page 150-151]{Weil1}). The group $Z$ is a suitable centre of $\mathcal{C}_g$ which is independent of the genus $g$ (see \cite[Theorems 7.4.1, 7.6.1, 7.6.3]{NRS06} or Table \ref{table:parameters} below),
$$\lambda^{(g)} :\quad \phi\in\Phi^{(g)}\quad\mapsto\quad \left( (x,y)\;\mapsto\; \phi(x+y)-\phi(x)-\phi(y)\right) ,$$
and $\mathcal{G}_g$ is a classical group as described in Table \ref{table:parameters}. Note here that the kernel of $\lambda^{(g)}$ consists precisely of those forms in $\Phi^{(g)}$, which are additive. In particular in the case $q^E$ we have that $\lambda^{(g)}$ is injective and hence $\ker\lambda^{(g)} = \{ 0\}$, whereas in the cases $q_1^E$, $2_I^E$ and $2_{II}^E$ we have that $\ker\lambda^{(g)} \cong \F^g$ (see \cite[Theorems 7.4.1,  7.6.1, 7.6.3]{NRS06}). 
\begin{table}[ht]
\centering
\begin{tabular}{llcccc}\toprule
type & : & $2_{I}^E$ & $2_{II}^E$ & $q^E$ & $q_1^E$\vspace{0.2cm}\\
$\mathcal{G}_g$ & : & $O_{2g}^+(\F_2)$ & $\Sp_{2g}(\F_2)$ & $\Sp_{2g}(\F_q)$  & $\Sp_{2g}(\F_q)$ \vspace{0.2cm}\\
$Z$ & : & $\Z/2\Z$ & $\Z/8\Z$ & $\Z/\gcd(q+1,4)\Z$ & $\F_p \times \Z/\gcd(q+1,4)\Z$ \vspace{0.2cm}\\
$\ker\lambda^{(g)}$ & : & $\F^g$ & $\F^g$ & $\{ 0\}$ & $\F^g$\\
\bottomrule\\
\end{tabular}
\caption{The data describing Clifford-Weil groups associated with codes of type $2_{I}^E$, $2_{II}^E$, $q^E$ and $q_1^E$.}\label{table:parameters}
\end{table}

\subsection{Cusp forms}\label{sec:cusp forms}
As it is explained in \cite[Chapter 9]{NRS06} and in particular in Theorem 9.1.14 there, the genus-$g$ complete weight enumerators of a linear code $C$ are related to the theta series attached to a lattice $\Lambda_C$ related to the code $C$ by the ``Construction A''. More generally, polynomial invariants of the Clifford-Weil group may be thought of as analogues of modular forms of genus $g$ for some appropriate theta group. The analogues of the notion of a cusp form may be defined via an analogue of Siegel's $\Phi$-operator. We introduce them following an exposition given in \cite{N09}. Later, in section \ref{sec:r<g}, we present a more general characterisation.

The finite Siegel $\Phi$-operators are linear maps and ring homomorphisms
$$\Phi_{g,j}:\C[x_v: v\in \F^g]\to \C[x_v: v\in \F^{g-j}],\qquad j\in\{ 0,\ldots ,g\},$$
such that
$$x_{(v_1,\ldots ,v_g)}\mapsto \begin{cases} 
x_{(v_1,\ldots ,v_{g-j})} & \mbox{if } v_{g-j+1}=\ldots =v_g=0\\
0 & \mbox{otherwise}
\end{cases} ;$$
if $j=g$, we write $x_\circ$ for the generator of the codomain of $\Phi_{g,g}$.

Together with $\Phi_{g,j}$ we consider lifts, also ring homomorphisms,
$$\varphi_{g,j}:\C[x_v: v\in \F^{g-j}]\to \C[x_v: v\in \F^{g}],\qquad x_{(v_1,\ldots ,v_{g-j})}\mapsto x_{(v_1,\ldots ,v_{g-j},0,\ldots ,0)} ,$$
$j\in\{ 0,\ldots ,g\}$, so that $\Phi_{g,j}\circ \varphi_{g,j}$ is the identity map. On the subspace $\C[x_v: v\in \F^g]_N$ of the space $\C[x_v: v\in \F^g]$ consisting of homogeneous polynomials of degree $N$ we define a hermitian inner product $\left(\;\, ,\; \right)_g$ such that on monomials
\[
\left( \prod_{v\in \F^g} x_v^{n_v}, \prod_{v\in \F^g} x_v^{m_v}\right)_g =\begin{cases}
\prod_{v\in \F^g} (n_v!),& n_v=m_v\mbox{ for all } v\in \F^g\\
0,& \mbox{otherwise.}
\end{cases}
\]

As it is explained in \cite{N09}, this inner product may also be written as
\begin{equation}\label{def:inner product}
(p,q)_g = p\left(\left(\frac{\partial}{\partial x_v}\right)_v\right) (\bar{q})  
\end{equation}
where $p, q\in \C[x_v: v\in \F^g]_N$, and $p\left(\left(\frac{\partial}{\partial x_v}\right)_v\right)$ denotes the differential operator
obtained from the polynomial $p$ by replacing each variable $x_v$ by $\frac{\partial}{\partial x_v}$ and $\bar{q}$ is the polynomial obtained from $q$ by applying complex conjugation to its coefficients. Observe that $(p,q)_g$ defined via \eqref{def:inner product} is always a constant polynomial because $p$ and $q$ are homogeneous of the same degree. For such polynomials $p$ and $q$, let $(p\gamma )((x_v)_v):=p((x_v)_v\gamma)$, $(q\gamma )((x_v)_v):=q((x_v)_v\gamma)$. We claim that 
 \begin{equation*}
 (p\gamma,q\gamma)_g= (p,q)_g \qquad \mbox{for all}\quad \gamma \in \mathcal{C}_g .
 \end{equation*}

To see this, observe that for any $\gamma \in GL_{|\F^g|}(\C)$,
\[
(p \gamma, q \gamma)_g = p\left(\left(\frac{\partial}{\partial x_v}\right)_v\gamma \right) \left(\bar{q}( (x_v)_v \bar{\gamma})\right) .
\]
In particular, since $\gamma \in \mathcal{C}_g$ is a unitary matrix, we have
\[
(p \gamma, q \gamma)_g = p\left(\left(\frac{\partial}{\partial x_v} \right)_v\T{\bar{\gamma}}^{-1} \right) (\bar{q} ((x_v)_v \bar{\gamma})) .
\]
Further, we see from \cite[Lemma 5.6.7]{NRS06} that
\[
p\left(\left(\frac{\partial}{\partial x_v}\right)_v \T{\bar{\gamma}}^{-1} \right) (\bar{q}((x_v)_v \bar{\gamma})) =\( p\left(\left(\frac{\partial}{\partial x_v}\right)_v\right) (\bar{q})\)((x_v)_v \bar{\gamma}) = (p,q)_g ((x_v)_v \bar{\gamma}) = (p,q)_g.
\]

Hence, we can conclude that the hermitian form $(p,q)_g$ is invariant under the action of $\mathcal{C}_g$, and hence the adjoint of an element in the Clifford-Weil group with respect to this inner product is nothing else than its inverse.\newline

Moreover by \cite{N09} we have the following:
\begin{enumerate}
\item For homogeneous polynomials $p\in \C[x_v: v\in \F^{g}]_N$ and $q\in \C[x_v: v\in \F^{g-j}]_N$ it holds that 
$$(\varphi_{g,j}(q),p)_g=(q,\Phi_{g,j}(p))_{g-j}.$$
\item $\varphi_{g,j}\circ \Phi_{g,j}$ is a self-adjoint idempotent in the space of endomorphisms of $\C[x_v: v\in \F^{g}]_N$.
\item The image of $\varphi_{g,j}$ is the orthogonal complement of the kernel of $\Phi_{g,j}$ 
and we have an orthogonal decomposition
\begin{align*}
\C &[x_v: v\in \F^{g}]_N =\ker(\Phi_{g,1})\perp\varphi_{g,1}(\C[x_v: v\in \F^{g-1}]_N)\\
&= \ker{\Phi_{g,1}}\perp\varphi_{g,1}(\ker (\Phi_{g-1,1}))\perp\varphi_{g,2}(\ker (\Phi_{g-2,1}))\perp\ldots\perp \varphi_{g,g-1}(\ker (\Phi_{1,1}))\perp\varphi_{g,g}(\C [x_\circ]_N )
\end{align*}
\end{enumerate}

\begin{defn}
Fix an integer number $g\geq 1$. We say that a non-zero $\C$-linear combination $f$ of genus-$g$ complete weight enumerators $\cwe_g(C)$ of some self-dual codes $C$ of fixed length and type is a cusp form of genus $g$ if $f\in \ker(\Phi_{g,1})$. More generally, we say that a non-zero homogeneous polynomial $f\in \C[x_v: v\in \F^{g}]$ is a cusp form of genus $g$ for a Clifford-Weil group $\mathcal{C}_g$ if $f$ is invariant under the action of $\mathcal{C}_g$ and $f\in \ker(\Phi_{g,1})$.
\end{defn}

For the codes of types $2_{I}^E$ and $2_{II}^E$ the dimensions of the spaces of cusp forms have been computed for $N\leq 32$ (see \cite[Tables 1 and 2]{N06}).

\subsection{Eisenstein series}
Fix genus $g\geq 1$ and a type of a code. The type determines the Clifford-Weil group $\mathcal{C}_g$ and then the length $N$: $N$ must be divisible by $|Z|$ (the size of the centre of $\mathcal{C}_g$). Let $\mathcal{P}_g$ be the parabolic subgroup of $\mathcal{C}_g$ generated by $m_u$ and $d_\phi$ ($u\in\GL_g(\F)$, $\phi\in\Phi^{(g)}$). With this data, for a type different from $q^E$, we define an Eisenstein series of genus $g$ and variable $\underline{x}=(x_v)_{v\in\F^g}$ as
$$E_g(\underline{x})=\frac{1}{|\mathcal{P}_g\backslash\mathcal{C}_g|}\sum_{\sigma\in \mathcal{P}_g\backslash \mathcal{C}_g} \left(\sum_{v\in\F^g} x_v^N\right)^\sigma .$$
Note that with the right action of the elements $\sigma$ and the assumption on the type and $N$ this is well-defined. When the type is $q^E$, we set
$$E_g(\underline{x})=\frac{1}{|\mathcal{P}_g\backslash\mathcal{C}_g|}\sum_{\sigma\in \mathcal{P}_g\backslash \mathcal{C}_g} \left( x_{(0,\ldots ,0)}^N\right)^\sigma .$$
It is clear from the definition of $m_u$, $d_{\phi}$ and the fact that all second degree characters are pointed ($\phi(0)=0$), that the group $\mathcal{P}_g$ acts trivially on $x_{(0,\ldots ,0)}$. The presence or lack of the sum $\sum_{v\in\F^g}$ in the above formulas is related to the kernel of $\lambda^{(g)}$ (cf. section \ref{sec:Clifford}) and the fact that the elements of the parabolic subgroup of  $\mathfrak{U}(M_g(\F),\Phi^{(g)})$, and thus also of $\mathcal{C}_g$, are defined up to $\ker (\lambda^{(g)} )$ (cf. section \ref{sec:double-coset-decomp}).

\subsection{Connection with Weil's work} 
Even though the main framework of this article is written in the terminology used by Nebe, Rains and Sloane, most notably in the book \cite{NRS06}, we have benefited substantially also from the fundamental works of Weil \cite{Weil1,Weil2}. While it is not necessary to be familiar with Weil's work to understand the proofs of our results, it grants a better understanding of the underlying structure. For this reason 
we decided to compare the above set-up with Weil's. Occasionally, in further parts of the article, we will use Weil's results and notation to clarify some aspects of our findings. The reader who is not familiar with Weil's work is suggested to skip this part for the time being.

The starting point of \cite{Weil1} is a locally compact abelian group $G$. Such a general approach is kept in Chapter I, and only after Chapter II the group $G$ is taken to be a finite dimensional vector space $X$ and the notation becomes additive instead of multiplicative. For us $G=\mathbb{F}_q^g$ is an additive group and $X = \mathbb{F}^g_q$ may be seen as a vector space over $\mathbb{F}_q$. We discuss separately the cases where $q$ is a prime or not. 

We first need to clarify a discrepancy in the terminology used in Weil's work and in \cite{NRS06}. Given a vector space $X$ as above, Weil's definition of a quadratic map on $X$ (see page 172) does not coincide with the notion of a quadratic map in \cite{NRS06} (see section 1.1 in the book), but rather with what in the book is called a homogeneous pointed quadratic map. In the case of even characteristic this means that Weil's notion of a quadratic map agrees with the notion of a pointed quadratic map of \cite{NRS06}. In the case of odd characteristic, however, Weil's quadratic map is never additive, whereas in \cite{NRS06} additivity is allowed. Actually, it is shown there that every pointed quadratic map decomposes as a sum of a homogeneous quadratic map and an additive map. With this in mind, we see that the group $Ps(X)$ in Weil (defined in page 181) coincides with the hyperbolic co-unitary group $\mathfrak{U}(M_g(\F),\Phi^{(g)})$ when $X$ is a finite vector space over $\mathbb{F}^g_2$. When $X$ is over $\mathbb{F}_p$ with $p$ odd, then one needs to augment Weil's definition to allow for additive quadratic maps; then $Ps(X)$ also coincides with the hyperbolic co-unitary group. 

To include in our discussion also the case where $X$ is over $\mathbb{F}_q$ for $q$ not a prime, we should consider the definition of the group $Ps(X/\mathcal{A})$ which Weil gives in page 208. Indeed, if we take (in Weil's notation) $\mathcal{A}=\mathbb{F}_q$ and $k=\mathbb{F}_p$ where $q=p^e$, then we see that the group $Ps(X/\mathbb{F}_q)$ is a subgroup of $Ps(X/\mathbb{F}_p)$ (here $X$ is seen as a vector space over $\mathbb{F}_p$) consisting of elements of the form $(\sigma,f)$ where $\sigma \in \Sp(X/\mathbb{F}_q)$ and $f$ is an $\mathbb{F}_p$-valued quadratic form of a particular kind. Indeed, as Weil explains in pages 207-208, we require that $f$ is of the form $f(x) = \tau (F(x))$ where $\tau : \mathbb{F}_q \rightarrow \mathbb{F}_p$ is the trace map and $F$ is an $\mathbb{F}_q$-valued quadratic map. Looking at the definition of a hyperbolic co-unitary group in the case of $\mathbb{F}_q$, we see that it coincides with Weil's $Ps(X/\mathbb{F}_q)$ if one also allows the map $F$ to be an $\mathbb{F}_q$-quadratic form in the sense of \cite{NRS06}, that is, including also additive forms.  

We now also briefly explain the relation of the hyperbolic co-unitary group with the group $B_0(G)$ in Weil's work. The group $B_0(G)$ consists of elements of the form $(\sigma, \psi)$ with $\psi \in X_2(G)$ (a second order character on $G$) and $\sigma \in \Sp(G)$. By the above it is enough to relate $Ps(X)$ to $B_0(G)$ when $X$ and $G$ are both finite vector spaces over $\mathbb{F}_p$. We simply write $X$ instead of $G$. 
Weil describes a map $\mu : Ps(X) \rightarrow B_0(X)$, which depends on the choice of a fixed character $\chi : \mathbb{F}_p \rightarrow \mathbb{C}^{\times}$. In this paper this character is taken as $\chi(m) = e^{2 \pi i \frac{m}{p}}$ for $ m \in \mathbb{F}_p$. In our case (finite vector spaces over $\mathbb{F}_p$) it is not hard to see that the composition $\chi \circ f$ with a pointed quadratic map $f$ (in the general sense of \cite{NRS06}) gives a bijection between $\mathbb{F}_p$-valued quadratic maps and second order characters. In this way, we see that the map $\mu$ is an isomorphism in our case, and hence we may identify the groups $Ps(X)$ and $B_0(X)$ with the hyperbolic co-unitary group $\mathfrak{U}(M_g(\F),\Phi^{(g)})$. 

We further note that in \cite{NRS06}, the authors used a fixed non-degenerate symmetric form to identify $X$ and its dual. Hence, given the difference in the way the groups act (from the left or from the right), if we write $x =(x_1,x_2,\ldots,x_g, x_{g+1},\ldots, x_{2g}) \in X\times X$, then the space $X^*$ in Weil corresponds to the space spanned by $x_1,\ldots, x_g$ in the notation of \cite{NRS06}. 

The group $B_0(X)$ acts on $L^2(X)$ and in this way Weil obtains a projective representation of the group $B_0(X)$ as unitary operators. This is the group $\mathbf{B}_0(X)$ in Weil and corresponds to the Clifford-Weil group $\mathcal{C}_g$ in our paper. Since $X$ is finite, the vector space $L^2(X)$ can be identified with $V = \{ f: \F^g \rightarrow \C \}$, a vector space of dimension $|\F|^g$, a basis of which is given by the indicator functions (in the above notation) $x_v : \F^g \rightarrow \C$, $x_v(w) = 1$ if $v=w$ and zero otherwise. In particular, via this correspondence, the element $m_u$ above corresponds to the element $\mathbf{d}(u)$ in Weil, and the element $d_{\phi}$ to the element $\mathbf{t}(\phi)$ in Weil. The partial Fourier transforms are also discussed in Weil (see for example \cite[Proposition 8]{Weil1}) and will be used later in the paper.

\section{Doubling method}\label{sec:doubling-method}
For a fixed genus $g$ we consider the map

\[
D: \C[X_{v}: v\in \F^{2g}] \rightarrow \C[x_{v_1}: v_1\in \F^g] \otimes \C[y_{v_2}: v_2\in \F^g],
\]
induced by
\[
D(X_{(v_1,v_2)}) = x_{v_1}y_{v_2}.
\]
This map was considered by Runge in \cite[page 184]{Runge96} and should be seen as the analogue of the doubling embedding in the theory of Siegel modular forms (of course here we give it on functions rather than the space itself). In particular it has the property that for a code $C$ of some length $N$ and belonging to one of the types discussed above, we have
\begin{equation}\label{eq:restriction}
D(\cwe_{2g}(C))(\underline{X}) = \cwe_g(C,\underline{x})\, \cwe_g(C,\underline{y}).
\end{equation}

As we've seen above, the space $\C[X_{v}: v\in \F^{2g}]$ admits an action of the Clifford-Weil group $\mathcal{C}_{2g}$, and similarly there is an action of $\mathcal{C}_g$ on the two copies of $\C[x_{v}: v\in \F^g]$. We now note that there is a homomorphism (not injective) 
\[
\Delta : \mathcal{C}_g \times \mathcal{C}_g  \rightarrow \mathcal{C}_{2g}
\]
such that for $\gamma_1,\gamma_2 \in \mathcal{C}_g$ and $p\in \C[X_{v}: v\in \F^{2g}]$ we have 
\[
(p(\underline{X}))^{\Delta(\gamma_1,\gamma_2)} = D(p(\underline{X}))^{\gamma_1 \times \gamma_2},
\]
where the right hand side is the action of $\mathcal{C}_g \times \mathcal{C}_g$ on $$\C[x_{v_1}: v_1\in \F^g] \otimes \C[y_{v_2}: v_2\in \F^g] .$$ 
This is explained in \cite[page 170, section 22]{Weil1}. Indeed, using the link between our notation and Weil's as explained above, we may take $X = X_1 \times X_2$ where $X= \F^{2g}$ and $X_1=X_2= \F^g$. Then we have a diagonal embedding $Ps(X_1) \times Ps(X_2) \hookrightarrow Ps(X)$ which is compatible with the action of $Ps(X)$ on $L^2(X) = L^2(X_1) \otimes L^2(X_2)$. Here we understand $L^2(X)$ as a tensor product which is compatible with the map of the variables $X_{(v_1,v_2)} = x_{v_1}\otimes y_{v_2}$, $X_{(v_1,v_2)} \in L^2(X)$, $x_{v_1} \in L^2(X_1)$, $y_{v_2} \in L^2(X_2)$ where we have used the interpretation of the variables as functions (as explained above). As it is explained in the last paragraph of \cite[section 22]{Weil1}, we then obtain a map homomorphism $\mathbf{B}_0(X_1) \times \mathbf{B}_{0}(X_2) \rightarrow \mathbf{B}_0(X)$ (this corresponds to $\Delta : \mathcal{C}_g \times \mathcal{C}_g \rightarrow \mathcal{C}_{2g}$ in our notation), which is compatible with the embedding $Ps(X_1) \times Ps(X_2) \hookrightarrow Ps(X)$. 

In the subsections below we will prove the following.
\begin{bthm}
Let $f\in \C [y_v: v\in\F_q^g]_N$ be a cusp form of genus $g$ associated with type $2^E_{II}$, $q_1^E$ or $q^E$. Then
\[
\left( D(E_{2g})(\underline{x},\underline{y}),f(\underline{y})\right)_g = cN! \bar{f}(\underline{x}),
\]
where the constant $c$ depends on the type but is independent of $f$:
\[
c=\begin{cases}
2^{g^2+2g-Ng/2}\prod_{i=1}^g\frac{2^i-1}{2^{g+i}+1} , & \mbox{for type } 2^E_{II}\\
q^{g^2+2g-Ng/2}\prod_{i=1}^g\frac{q^i-1}{q^{g+i}+1} , & \mbox{for type } q^E_{1}\\
q^{g^2-Ng/2}\prod_{i=1}^g\frac{q^i-1}{q^{g+i}+1} , & \mbox{for type } q^E
\end{cases}
\]
and $\bar{f}(\underline{x})$ denotes the polynomial obtained from $f(\underline{x})$ by applying complex conjugation to the coefficients. 
\end{bthm}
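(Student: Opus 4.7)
The plan is to implement the doubling method as sketched in the appendix. First I would unfold the Eisenstein series
\[
E_{2g}(\underline{X}) = \frac{1}{|\mathcal{P}_{2g}\backslash \mathcal{C}_{2g}|} \sum_{\sigma\in\mathcal{P}_{2g}\backslash\mathcal{C}_{2g}} h(\underline{X})^{\sigma},
\]
where $h$ is the parabolic-invariant seed ($\sum_{v\in\F_q^{2g}} X_v^N$ for types $2^E_{II}$ and $q_1^E$, $X_0^N$ for type $q^E$); apply the doubling map $D$; and then pair against $f(\underline{y})$ using $(\,,\,)_g$ in the $\underline{y}$-variable. This is well-posed because $D$ sends a polynomial of total degree $N$ to one of bidegree $(N,N)$. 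Since the inner product is $\mathcal{C}_g$-invariant (as established above via \cite[Lemma 5.6.7]{NRS06}) and $f$ is $\mathcal{C}_g$-invariant, I may reorganise the sum over $\sigma$ by double cosets in $\mathcal{P}_{2g}\backslash \mathcal{C}_{2g}/\Delta(\mathcal{C}_g\times\mathcal{C}_g)$; only a choice of representative $w$ of each double coset matters, with the size of the $\Delta(\mathcal{C}_g\times\mathcal{C}_g)$-orbit contributing a numerical weight.

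\textbf{Double-coset decomposition.} Next I would produce an explicit transversal for $\mathcal{P}_{2g}\backslash \mathcal{C}_{2g}/\Delta(\mathcal{C}_g\times\mathcal{C}_g)$. Via the isomorphism \eqref{eq:iso of U}, this reduces, up to contributions from $\ker\lambda^{(2g)}$ and from the centre $Z$ listed in Table \ref{table:parameters}, to the analogous double quotient for the classical group $\mathcal{G}_{2g}$, which is symplectic in the three cases at hand. A Bruhat-type decomposition then provides a transversal indexed by an integer $0\le r\le g$ recording the rank of the ``coupling block'' between the two $\mathcal{C}_g$-factors; this is the coding-theoretic analogue of the rank stratification used by Garrett and by Piatetski-Shapiro and Rallis.

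\textbf{Vanishing for $r<g$.} For each representative $w$ of rank $r<g$, I expect $D(h^w)$, viewed as a polynomial in $\underline{y}$, to depend only on a subset of the variables $\{y_v: v\in V\}$ with $V\subset\F_q^g$ a subspace of dimension $r$; equivalently, $D(h^w)(\underline{x},\cdot)$ lies in the image of the lift $\varphi_{g,g-r}$ (up to a $\mathcal{C}_g$-twist that $f$'s invariance absorbs). The adjunction
\[
(\varphi_{g,j}(q),f)_g=(q,\Phi_{g,j}(f))_{g-j}
\]
combined with the factorisation $\Phi_{g,j}=\Phi_{g-j+1,1}\circ\cdots\circ\Phi_{g,1}$ then forces the pairing to vanish, since $\Phi_{g,1}(f)=0$ by cuspidality. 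Hence only the top-rank representative $r=g$ survives.

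\textbf{The surviving term and the constant $c$.} The sole contribution comes from a representative $w$ of ``Fourier type'', namely an element $h_{\iota,u_\iota,v_\iota}$ with $\iota$ the symmetric idempotent that pairs the two copies of $\F_q^g$ off-diagonally. Expanding $h^w$ using \eqref{generators} and applying $D$ should yield a scalar multiple of $\bigl(\sum_{v\in\F_q^g} x_v y_v\bigr)^N$, possibly twisted by a character absorbed into complex conjugation; then the differential-operator form \eqref{def:inner product} of the inner product immediately produces $N!\,\bar f(\underline{x})$. The principal obstacle is the honest determination of the constant $c$: it requires computing $|\mathcal{P}_{2g}\backslash\mathcal{C}_{2g}|$ and the size of the surviving double-coset orbit from the orders of the relevant classical groups in Table \ref{table:parameters}, and carefully tracking the contributions of $\ker\lambda^{(g)}$ and of the centre $Z$. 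This bookkeeping is type-sensitive, which explains why $c$ takes three different closed forms in the statement, and why the case $2^E_I$---where $\mathcal{G}_g$ is the orthogonal group $O^+_{2g}(\F_2)$ and the Bruhat stratification behaves differently---is only formulated as a conjecture here.
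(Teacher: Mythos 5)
Your proposal follows essentially the same route as the paper: a double-coset decomposition $\mathcal{P}_{2g}\backslash\mathcal{C}_{2g}/\Delta(\mathcal{C}_g\times\mathcal{C}_g)$ indexed by a rank $0\le r\le g$ (obtained in the paper by lifting Shimura's decomposition of $P_{2g}\backslash\Sp_{4g}/(\Sp_{2g}\times\Sp_{2g})$ to the hyperbolic co-unitary group), vanishing of the $r<g$ strata via the adjunction $(\varphi_{g,j}(q),f)_g=(q,\Phi_{g,j}(f))_{g-j}$ and cuspidality, and a Fourier-transform collapse of the $r=g$ term to $\bigl(\sum_v x_vy_v\bigr)^N$ yielding $N!\,\bar f$ times a constant computed from group orders. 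The only point you gloss over that the paper must handle explicitly is that the $r<g$ term is a sum of lifts $\phi_{g,g-r}^{(v_2')}$ over \emph{all} tails $v_2'$, not just the zero tail, which is exactly why the paper introduces the generalized operators $\Phi_{g,j}^{(\mathbf{w})}$ and proves (using invariance under the translations in $\mathcal{P}_g$, as your parenthetical ``twist'' remark anticipates) that cusp forms are annihilated by all of them.
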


\noindent In subsection \ref{sec:computation} we give a computational evidence towards existence of such constant for the type $2^E_I$.

\subsection{Double coset decomposition}\label{sec:double-coset-decomp}
Using the notation above, our first aim is to find a ``nice'' decomposition of the double coset $\mathcal{P}_{2g} \setminus \mathcal{C}_{2g} / \Delta(\mathcal{C}_g \times \mathcal{C}_g)$. For this we will first find a similar decomposition on the level of hyperbolic co-unitary groups. That is, we will find a decomposition of the double cosets
\[
\mathfrak{P}(M_{2g}(\F), \Phi^{(2g)})\setminus \mathfrak{U}(M_{2g}(\F),\Phi^{(2g)})/ \mathfrak{U}(M_g(\F),\Phi^{(g)}) \times \mathfrak{U}(M_g(\F),\Phi^{(g)})
\]
where $\mathfrak{P}(M_{2g}(\F), \Phi^{(2g)})$ is the hyperbolic unitary Siegel parabolic defined in \cite[pages 130, 136]{NRS06} and is generated by:
$$d(u,\phi) =\left(\left(\begin{matrix} \T{u}^{-1} & \T{u}^{-1}\lambda(\phi)\\ 0 & u\end{matrix}\right) ,\left(\begin{matrix} 0 & \\ & \phi \end{matrix}\right) \right) ,\qquad u\in\GL_{2g}(\F) ,\, \phi\in \Phi^{(2g)}.$$
The element $d(u,\phi)$ corresponds to $m_u d_\phi$ via the representation $\pi$ from section \ref{sec:Clifford}. The other generators of $\mathfrak{U}(M_{2g}(\F),\Phi^{(2g)})$ are of the form 
$$H_{\iota,u_\iota,v_\iota}=\left(\left(\begin{matrix} 1-\T{\iota} & v_\iota \\ -\T{u_\iota} & 1-\iota\end{matrix}\right) ,\left(\begin{matrix}  0 & -\iota\\ & 0\end{matrix}\right) \right) ,$$
where $\iota=u_\iota v_\iota$ varies over symmetric idempotents in $M_{2g}(\F)$, they correspond to the elements $h_{\iota,u_\iota,v_\iota}$ (see \cite[Theorem 5.3.2]{NRS06}). We note here also that the elements 
\[
\left(\left( \begin{matrix} a & b \\ c & d \end{matrix}\right) , \left(\begin{matrix} \phi_1 & m \\  & \phi_2 \end{matrix}\right)\right) \in \mathfrak{U}(M_{2g}(\F),\Phi^{(2g)})
\]
satisfy the identity (see \cite[Definition 5.2.4]{NRS06})
\begin{equation}\label{eq:U-identity}
\left( \begin{matrix} \T{c} a & \T{c} b \\ \T{d}a - 1 & \T{d}b \end{matrix}\right) = \left(\begin{matrix} \lambda(\phi_1) & m \\ \T{m} & \lambda(\phi_2) \end{matrix}\right) ;
\end{equation}
and the group law for $(\sigma ,f), (\sigma' ,f')\in \mathfrak{U}(M_{2g}(\F),\Phi^{(2g)})$ is given by (see \cite[Definition 5.1.1 and equation (5.2.6)]{NRS06})
\begin{equation}\label{eq:operation in U}
(\sigma ,f)(\sigma' ,f')=(\sigma\sigma' ,f[\sigma']+f') ,
\end{equation}
where for $f=\begin{pmatrix} \phi_1 & m\\ & \phi_2\end{pmatrix}\in \Phi^{(2g)}$ we have
\begin{equation}\label{eq:bracket}
\begin{pmatrix} \phi_1 & m\\ & \phi_2\end{pmatrix}\left[ \begin{pmatrix} a & b\\ c & d \end{pmatrix}\right] = \begin{pmatrix} \phi_1' & m'\\ & \phi_2'\end{pmatrix} ,
\end{equation}
and
\begin{eqnarray*}
\phi_1' & = &\phi_1[a]+\phi_2[c] +\{\!\{ \T{a}mc\}\!\} ,\\
m' & = & \T{a}\lambda(\phi_1)b +\T{a}md +\T{c}\lambda(\phi_2)d +\T{c}\T{m}b ,\\
\phi_2' & = &\phi_1[b]+\phi_2[d] +\{\!\{ \T{b}md\}\!\} .
\end{eqnarray*}
For a precise definition of the above symbols we refer the reader to \cite{NRS06}, especially chapter 1. Here we only note that $\lambda(\phi_i)=\lambda^{(g)}(\phi_i)\in  M_{g}(\F)$ is a bilinear form associated to $\phi_i\in \Phi^{(g)}$, $\{\!\{ A\}\!\}$ is an element in $\Phi^{(2g)}$ associated to $A\in M_{2g}(\F)$, the definition of $\phi_i[\; ]$ is given by the formula \eqref{eq:bracket} and $\phi_i[1]=\phi_i$. In particular, $(\sigma ,f)(1 ,f')=(\sigma,f+f')$.

In order to find the aforementioned decomposition, we relate the group $\mathfrak{U}(M_{2g}(\F),\Phi^{(2g)})$ with $\mathcal{G}_{2g}$ (recall Table \ref{table:parameters}). Namely, if we denote by $P_{2g}(\F)\subset \Sp_{2g}(\F)$ the classical Siegel-parabolic (lower left block entry is zero) of size $4g$, then the projection from $\mathfrak{P}(M_{2g}(\F), \Phi^{(2g)})$ to the first coordinate induces an isomorphism
$$\mathfrak{P}(M_{2g}(\F), \Phi^{(2g)}) \cong \ker(\lambda^{(2g)}). (P_{2g}(\F) \cap\mathcal{G}_{2g}).$$
In particular, for type $q^E$ we get $\mathfrak{P}(M_{2g}(\F), \Phi^{(2g)}) \cong P_{2g}(\F)$.

\begin{lem}\label{lem:double-coset-decomp}
Let $\mathfrak{U}(M_{2g}(\F),\Phi^{(2g)})$ be a hyperbolic co-unitary group associated with a code of type $2^E_{II}$, $q^E$ or $q^E_1$, $g\geq 1$. Then
\[
\mathfrak{U}(M_{2g}(\F),\Phi^{(2g)}) = \bigsqcup_{0 \leq r \leq g} \mathfrak{P}(M_{2g}(\F), \Phi^{(2g)})\,\, \boldsymbol{\tau}_r \,\,\left(\mathfrak{U}(M_g(\F),\Phi^{(g)}) \times \mathfrak{U}(M_g(\F),\Phi^{(g)})\right) ,
\]
where 
$$\boldsymbol{\tau}_r = \left( \begin{pmatrix} 1_g & & & \\ & 1_g & & \\ & e_r & 1_g & \\ e_r & & & 1_g\end{pmatrix} ,
 \begin{pmatrix} & e_r &  & \\ 0_g & & & \\ & & 0_g & \\ &  & & 0_g
 \end{pmatrix}\right),\qquad e_r=\left(\begin{smallmatrix} 1_r & \\ & 0_{g-r}\end{smallmatrix}\right) \in M_{g}(\F),$$
$r\in\{ 0,\ldots ,g\}$.
\end{lem}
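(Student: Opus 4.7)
The plan is to work in two stages: first establish the analogous double coset decomposition at the level of the classical group $\mathcal{G}_{2g}$, and then lift it to the hyperbolic co-unitary group $\mathfrak{U}(M_{2g}(\F),\Phi^{(2g)})$ by tracking the quadratic-form component through the group law \eqref{eq:operation in U}.

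For the classical stage, I would use the projection $\mathfrak{U}(M_{2g}(\F),\Phi^{(2g)})\twoheadrightarrow \mathcal{G}_{2g}$ coming from \eqref{eq:iso of U}, which sends $\mathfrak{P}(M_{2g}(\F),\Phi^{(2g)})$ onto $P_{2g}(\F)\cap\mathcal{G}_{2g}$ and the doubled subgroup onto $\mathcal{G}_g\times\mathcal{G}_g$. This reduces the problem to proving
\[
\mathcal{G}_{2g} \;=\; \bigsqcup_{r=0}^{g}\,(P_{2g}(\F)\cap\mathcal{G}_{2g})\,\tau_r\,(\mathcal{G}_g\times\mathcal{G}_g),
\]
where $\tau_r$ denotes the matrix component of $\boldsymbol{\tau}_r$. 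This is the classical doubling decomposition in the style of Garrett for the embedding $\Sp_{2g}\times\Sp_{2g}\hookrightarrow\Sp_{4g}$: the index $r$ records the rank of a distinguished $g\times g$ off-diagonal sub-block of $\sigma\in\mathcal{G}_{2g}$, and one proves the decomposition by row operations from the left (Siegel parabolic elements) and column operations from the right (block-diagonal elements of $\mathcal{G}_g\times\mathcal{G}_g$), exploiting the symplectic constraint to force the remaining blocks into normal form. One then checks directly, using \eqref{eq:U-identity}, that the specific $\boldsymbol{\tau}_r$ in the statement lies in $\mathfrak{U}$; the off-diagonal entry $e_r$ in the displayed quadratic form is precisely tuned so that the four blocks $\T{c}a,\,\T{c}b,\,\T{d}a-1,\,\T{d}b$ match $\lambda(\phi_1),\,m,\,\T{m},\,\lambda(\phi_2)$.

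The main obstacle is the lift from the classical decomposition to $\mathfrak{U}$. Given $(\sigma,f)\in\mathfrak{U}$, I would decompose $\sigma=p_0\tau_r s_0$ at the classical level, pick arbitrary lifts $P\in\mathfrak{P}$ of $p_0$ and $S\in\mathfrak{U}(M_g(\F),\Phi^{(g)})^{\times 2}$ of $s_0$, and compute $P\,\boldsymbol{\tau}_r\,S=(\sigma,f')$ via \eqref{eq:operation in U} for some explicit $f'\in\Phi^{(2g)}$. The task is to absorb the discrepancy $f-f'$ by varying $P$ (or $S$) within its fibre above $p_0$, which is a torsor under $\ker\lambda^{(2g)}$. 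For type $q^E$ this is immediate, since $\ker\lambda^{(2g)}=0$ makes the projection $\mathfrak{P}\cong P_{2g}(\F)$ an isomorphism and $f$ is uniquely determined by $\sigma$. For types $2_{II}^E$ and $q_1^E$, where $\ker\lambda^{(2g)}\cong\F^{2g}$, one must track how these central quadratic forms transform under the bracket $[\tau_r]$ of \eqref{eq:bracket}, where the terms $\{\!\{ \T{a}mc\}\!\}$ and $\{\!\{ \T{b}md\}\!\}$ contribute, and verify surjectivity of the resulting map onto the allowed space of discrepancies; this case-by-case bookkeeping, and the concomitant explanation of why the argument genuinely excludes the $2_I^E$ case (whose extension behaves differently), is where I expect the real technical work to lie. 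Disjointness of the double cosets in $\mathfrak{U}$ then follows at once from the already-established disjointness at the level of $\mathcal{G}_{2g}$.
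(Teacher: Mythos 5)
Your two-stage plan is essentially the paper's own proof: the classical decomposition of $P_{2g}(\F)\backslash\Sp_{4g}(\F)$ under $\Sp_{2g}(\F)\times\Sp_{2g}(\F)$ is simply quoted there from Shimura's Lemma 4.2 (rather than reproved Garrett-style), and the lift to $\mathfrak{U}(M_{2g}(\F),\Phi^{(2g)})$ is carried out exactly as you describe, by showing that arbitrary lifts of the classical representatives work because the ambiguity in the quadratic-form component can be absorbed. The one point to make precise in your sketch is that this ambiguity is a pair $(\ell_1,\ell_2)\in\ker\lambda^{(2g)}\oplus\ker\lambda^{(2g)}$ and varying $P$ in its fibre only produces correlated discrepancies of the form $\bigl(\ell[c],\ell[d]\bigr)$, so one genuinely needs the right action of the additive-character elements $(1_{4g},\left(\begin{smallmatrix}\ell_1 & \\ & \ell_2\end{smallmatrix}\right))$ coming from $\mathfrak{U}_g\times\mathfrak{U}_g$ --- your parenthetical ``or $S$'' --- which is precisely the compatibility $j(z+w)=j(z)\cdot i(w)$ the paper verifies; also, the exclusion of type $2^E_I$ is due solely to the unavailability of the classical double coset decomposition for $O^+_{4g}(\F_2)$, not to different behaviour of the extension.
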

\begin{proof}
The above discussion implies that the first two rows below are exact.
\[\footnotesize{
\xymatrix{
  0 \ar[r] & \ker\lambda^{(g)} \ar@{^{(}->}[d] \ar[r] & \mathfrak{P}(M_{g}(\F), \Phi^{(g)}) \ar@{^{(}->}[d] \ar[r] & P_{g}(\F) \ar@{^{(}->}[d] \ar[r] & 1 \\
  0 \ar[r] & \ker\lambda^{(g)}\oplus\ker\lambda^{(g)} \ar[r] \ar[d] & \mathfrak{U}(M_{g}(\F),\Phi^{(g)}) \ar[r]^-\phi \ar[d] & \Sp_{2g}(\F) \ar[r] \ar[d] & 1 \\
  & \ker\lambda^{(g)} \setminus (\ker\lambda^{(g)}\oplus\ker\lambda^{(g)}) & \mathfrak{P}(M_{g}(\F), \Phi^{(g)})\backslash \mathfrak{U}(M_{g}(\F),\Phi^{(g)})  &  P_{g}(\F)\backslash\Sp_{2g}(\F) &
}}
\]
By diagram chasing we may write the sequence
$$0\to \ker\lambda^{(g)} \setminus (\ker\lambda^{(g)}\oplus\ker\lambda^{(g)}) \to \mathfrak{P}(M_{g}(\F), \Phi^{(g)})\backslash \mathfrak{U}(M_{g}(\F),\Phi^{(g)}) \to P_{g}(\F)\backslash\Sp_{2g}(\F) \to 1.$$
This is a short exact sequence of pointed sets whose distinguished points are the cosets which include the identity of the corresponding group. That is, if we write $\pi$ for the third arrow, then the preimage $\pi^{-1}(\mathbf{1})$ is in bijection with the image of  $\ker\lambda^{(g)} \cong \ker\lambda^{(g)} \setminus (\ker\lambda^{(g)}\oplus\ker\lambda^{(g)})$ in $\mathfrak{P}(M_{g}(\F), \Phi^{(g)})\backslash \mathfrak{U}(M_{g}(\F),\Phi^{(g)})$. 
Moreover, consider $\mathfrak{P}(M_{g}(\F),\Phi^{(g)})y_1, \mathfrak{P}(M_{g}(\F),\Phi^{(g)}) y_2 \in \mathfrak{P}(M_{g}(\F), \Phi^{(g)})\backslash \mathfrak{U}(M_{g}(\F),\Phi^{(g)})$ with the same image under $\pi$, say $ P_{g}(\F) x \in P_{g}(\F)\backslash\Sp_{2g}(\F)$. Then $\phi(y_1) = p_1 x$ for some $p_1 \in P_{g}(\F)$ and $\phi(y_2) = p_2 x$ for some $p_2 \in P_{g}(\F)$. Since $\phi$ is a group homomorphism, 
$\phi(y_1y_2^{-1}) = \phi(y_1) \phi(y_2)^{-1}= p_1 p_2^{-1} \in P_{g}(\F)$. Hence $\mathfrak{P}(M_{g}(\F),\Phi^{(g)}) y_1 y_2^{-1} \in \ker \pi$ and so it belongs to the image of $\ker\lambda^{(g)} \setminus (\ker\lambda^{(g)}\oplus\ker\lambda^{(g)})$.  

We now consider the right group action of $\Sp_{2g}(\F)\times\Sp_{2g}(\F)$ on the homogeneous space 
$P_{2g}(\F)\backslash\Sp_{4g}(\F)$
induced by the embedding
\[
\Sp_{2g}(\F)\times\Sp_{2g}(\F)\hookrightarrow\Sp_{4g}(\F)
\]
given by $$\left(\left(\begin{smallmatrix} a & b \\ c & d \end{smallmatrix}\right) ,
\left(\begin{smallmatrix} a' & b' \\ c' & d' \end{smallmatrix}\right) \right) \mapsto
\left(\begin{smallmatrix} a & & b & \\ & a' & & b'\\ c & & d &\\ & c' & & d'\end{smallmatrix}\right) ,$$
exactly as in \cite{Shimura}. Similarly there is a map
\[
\mathfrak{U}(M_{g}(\F),\Phi^{(g)})\times \mathfrak{U}(M_{g}(\F),\Phi^{(g)}) \rightarrow \mathfrak{U}(M_{2g}(\F),\Phi^{(2g)}) 
\]
as for example it is explained in \cite[page 171]{Weil1}, where $B_0(G)$ is the notation there for the co-unitary group; one can also derive it from the above embedding of $\Sp_{2g}(\F)\times\Sp_{2g}(\F)$ and the property \eqref{eq:U-identity}. In particular, we may lift the previous action to an action of $\mathfrak{U}(M_{g}(\F),\Phi^{(g)})\times \mathfrak{U}(M_{g}(\F),\Phi^{(g)})$ on
$\mathfrak{P}(M_{2g}, \Phi^{(2g)})\backslash \mathfrak{U}(M_{2g}(\F),\Phi^{(2g)})$ such that
\[\footnotesize{
\xymatrix@R=0.1cm{
  0 \ar[r] & (\ker\lambda^{(g)})^2 \times (\ker\lambda^{(g)})^2 \ar[r]^-i & \mathfrak{U}(M_{g}(\F),\Phi^{(g)})\times \mathfrak{U}(M_{g}(\F),\Phi^{(g)}) \ar[r]^-p & \Sp_{2g}(\F)\times\Sp_{2g}(\F) \ar[r] & 1 \\
  & \rotatebox[origin=c]{90}{\scalebox{1.4}{\reflectbox{\mbox{$\acts$}}}} & \rotatebox[origin=c]{90}{\scalebox{1.4}{\reflectbox{\mbox{$\acts$}}}} & \rotatebox[origin=c]{90}{\scalebox{1.4}{\reflectbox{\mbox{$\acts$}}}} \\
  0 \ar[r] & \ker\lambda^{(2g)} \setminus (\ker\lambda^{(2g)}\times\ker\lambda^{(2g)}) \ar[r]^-j & \mathfrak{P}(M_{2g}(\F), \Phi^{(2g)})\backslash\mathfrak{U}(M_{2g}(\F),\Phi^{(2g)}) \ar[r]^-\pi  & P_{2g}(\F)\backslash\Sp_{4g}(\F) \ar[r] & 1 
}}
\]
Here the compatibility of the group actions is to be understood in the following way: if $g \in \mathfrak{U}(M_{g}(\F),\Phi^{(g)})\times \mathfrak{U}(M_{g}(\F),\Phi^{(g)})$ and $ y \in \mathfrak{P}(M_{2g}(\F), \Phi^{(2g)})\backslash\mathfrak{U}(M_{2g}(\F),\Phi^{(2g)}) $, then 
\begin{equation}\label{eq:pi-compatibility}
\pi (y \cdot g) =  \pi(y) \cdot p(g);
\end{equation}
similarly, for $z\in (\ker\lambda^{(g)})^2\backslash (\ker\lambda^{(2g)}\times \ker\lambda^{(2g)})$ and $w\in  (\ker\lambda^{(g)})^4$ we have
\begin{equation}\label{eq:j-compatibility}
j(z + w) = j(z) \cdot i(w).
\end{equation}

Indeed, to see why we have the compatibility \eqref{eq:pi-compatibility}, write $y=\mathfrak{P}(M_{2g}(\F), \Phi^{(2g)})(y_1,y_2)$ and $g=((g_1,g_2),(h_1,h_2))$, where $y_1\in \Sp_{4g}(\F)$, $g_1, h_1\in\Sp_{2g}(\F)$. Then $p(g)=(g_1,h_1)$ and $\pi(y)=P_{2g} y_1$, so using the operation \eqref{eq:operation in U} in $\mathfrak{U}(M_{2g}(\F),\Phi^{(2g)})$, we obtain
\[
\pi (y \cdot g) = \pi (P_{2g} y_1\cdot (g_1,h_1),*) = P_{2g} y_1\cdot (g_1,h_1) = \pi(y) \cdot p(g).
\]

To prove \eqref{eq:j-compatibility}, recall that $\ker\lambda^{(g)}$ is either trivial or isomorphic to $\F^g$. In the first case there's nothing to show, so assume the latter. Let $z=(\ker\lambda^{(g)})^2+z'\in (\ker\lambda^{(g)})^2\backslash(\ker\lambda^{(g)})^4$ with $z'=(z_{1},z_{2})\in (\ker\lambda^{(g)})^2\times(\ker\lambda^{(g)})^2$ and $w=(w_{1},w_{2})\in (\ker\lambda^{(g)})^2\times(\ker\lambda^{(g)})^2$. (Of course, the sum $(\ker\lambda^{(g)})^2+z'$ needs to be understood in a suitable way, depending on the choice of the quotient $(\ker\lambda^{(g)})^2\backslash(\ker\lambda^{(g)})^4$.) Note that
\[
i(w)=(1_{4g},\(\begin{smallmatrix} w_{1} & \\ & w_{2}\end{smallmatrix}\))\quad\mbox{and}\quad
j(z) = (1_{4g},\(\begin{smallmatrix} 0 & \\ & (\ker\lambda^{(g)})^2\end{smallmatrix}\) +\(\begin{smallmatrix} z_{1} & \\ & z_{2}\end{smallmatrix}\)).
\]
Hence
\[
j(z+w)=j((\ker\lambda^{(g)})^2+z'+w) = (1_{4g},\(\begin{smallmatrix} 0 & \\ & (\ker\lambda^{(g)})^2\end{smallmatrix}\) +\(\begin{smallmatrix} z_{1} + w_{1} & \\ & z_{2}+w_{2}\end{smallmatrix}\))
\]
and, by \eqref{eq:operation in U} and \eqref{eq:bracket},
\begin{align*}
j(z) \cdot i(w) &= (1_{4g},\(\begin{smallmatrix} 0 & \\ & (\ker\lambda^{(g)})^2\end{smallmatrix}\) +\(\begin{smallmatrix} z_{1} & \\ & z_{2}\end{smallmatrix}\))\cdot (1_{4g},\(\begin{smallmatrix} w_{1} & \\ & w_{2}\end{smallmatrix}\))\\
&=(1_{4g},\(\begin{smallmatrix} 0 & \\ & (\ker\lambda^{(g)})^2\end{smallmatrix}\) +\(\begin{smallmatrix} z_{1} & \\ & z_{2}\end{smallmatrix}\) +\(\begin{smallmatrix} w_{1} & \\ & w_{2}\end{smallmatrix}\)) =j(z+w).
\end{align*}

Let us now write $\{x_1,\ldots,x_n\}$ for a collection of representatives of the action of $ \Sp_{2g}(\F)\times\Sp_{2g}(\F) $ on $P_{2g}(\F)\backslash\Sp_{4g}(\F)$, and let us further select any lifts $\{y_1,\ldots,y_n\}$ of these $x_i$'s in $\mathfrak{P}(M_{2g}(\F), \Phi^{(2g)})\backslash\mathfrak{U}(M_{2g}(\F),\Phi^{(2g)})$, i.e. $\pi(y_i) = x_i$ for $i\in\{ 1,\ldots ,n\}$. We claim that the $y_i$'s form a set of representatives for the orbits of $\mathfrak{P}(M_{2g}(\F), \Phi^{(2g)})\backslash\mathfrak{U}(M_{2g}(\F),\Phi^{(2g)})$ with respect to the action of $G:=\mathfrak{U}(M_{g}(\F),\Phi^{(g)})\times \mathfrak{U}(M_{g}(\F),\Phi^{(g)})$.

First, we show that the orbits represented by $y_i$'s are disjoint. Suppose there exist $g_1,g_2 \in G$ such that $y_i g_1 = y_jg_2$. Then after taking the map $\pi$ and using the compatibility \eqref{eq:pi-compatibility} we obtain $x_i p(g_1) = x_j p(g_2)$. This means that $x_i$ and $x_j$ are in the same orbit, and therefore $i=j$. Secondly, we show that the orbits represented by $y_i$'s cover the entire space. Fix an element $y \in \mathfrak{P}(M_{2g}(\F), \Phi^{(2g)})\backslash\mathfrak{U}(M_{2g}(\F),\Phi^{(2g)})$ and assume that $\pi(y)$ belongs to the orbit represented by $x_i$. We claim that $y \in y_i G$.

By the definition of $\phi$,   
\[
\phi(\(\begin{matrix} a & b \\ c & d\end{matrix}\) ,\(\begin{matrix} \phi_1 & m \\ & \phi_2\end{matrix}\)) = \(\begin{matrix} a & b \\ c & d\end{matrix}\)
\]
Note here that by \eqref{eq:U-identity} the matrix $\(\begin{smallmatrix} \phi_1 & m \\ & \phi_2\end{smallmatrix}\)$ is defined uniquely by $a, b, c, d$, up to $\(\begin{smallmatrix} \ell_1 & \\ & \ell_2\end{smallmatrix}\)$ with $\ell_1,\ell_2\in\ker\lambda$.
Observe also that
\[
(\(\begin{matrix} p_a & p_b \\ & p_d\end{matrix}\) ,\(\begin{matrix} 0 & \\ & f\end{matrix}\) ) (\(\begin{matrix} a & b \\ c & d\end{matrix}\) ,\(\begin{matrix} \phi_1 & m \\ & \phi_2\end{matrix}\) )
=(\(\begin{matrix} p_a & p_b \\ & p_d\end{matrix}\)\(\begin{matrix} a & b \\ c & d\end{matrix}\) ,\(\begin{matrix} f[c] & \\ & f[d] \end{matrix}\) +\(\begin{matrix} \phi_1 & m \\ & \phi_2\end{matrix}\) ).
\]
Hence $\pi^{-1}(P_{2g} \(\begin{smallmatrix} a & b \\ c & d\end{smallmatrix}\))$ equals
\[
 \{(p 
\(\begin{matrix} a & b \\ c & d\end{matrix}\) , \(\begin{matrix} (f+\ell )[c] & \\ & (f+\ell )[d] \end{matrix}\) +\(\begin{matrix} \phi_1 +\ell_1 & m \\ & \phi_2+\ell_2\end{matrix}\) ) : p\in P_{2g}, \ell,\ell_1,\ell_2\in\ker\lambda^{(2g)}\} ,
\]
where $\phi_1, m, \phi_2$ satisfy \eqref{eq:U-identity} and, similarly, $f$ is determined by $p$ up to $\ell\in\ker\lambda^{(2g)}$. This implies that from the equality $\pi(y)=\pi(y_i)$ for $y=\mathfrak{P}(M_{2g}(\F), \Phi^{(2g)})(y_1,y_2)$ and $y_i=\mathfrak{P}(M_{2g}(\F), \Phi^{(2g)})(y_{i,1},y_{i,2})$ with $y_{i,1}=\(\begin{smallmatrix} a & b \\ c & d\end{smallmatrix}\)$, we may deduce that $y_1=py_{i,1}$ and $y_2=y_{i,2}+\(\begin{smallmatrix} \ell_1+(f+\ell )[c] & \\ & \ell_2+(f+\ell )[d] \end{smallmatrix}\)$ for some $p\in P_{2g}$ and $\ell_1,\ell_2,\ell\in\ker\lambda^{(2g)}$. Hence
\[
y=\mathfrak{P}(M_{2g}(\F), \Phi^{(2g)})(y_1,y_2)=\mathfrak{P}(M_{2g}(\F), \Phi^{(2g)})(p,\(\begin{matrix} 0 & \\ & f+\ell \end{matrix}\))(y_{i,1} , y_{i,2})(1_{4g}, \(\begin{matrix} \ell_1 & \\ & \ell_2 \end{matrix}\)) \in y_i G.
\]

The representatives for the double coset of $\Sp_{4g}(\F)$ are given in \cite[Lemma 4.2]{Shimura}\footnote{The proof is written in a situation when  $\F$ is a number field, but it also holds for finite fields.}, these are 
$$\tau_r =\left(\begin{smallmatrix} 1_g & & & \\ & 1_g & & \\ & e_r & 1_g & \\ e_r & & & 1_g\end{smallmatrix}\right),\qquad\mbox{where}\quad e_r=\left(\begin{smallmatrix} 1_r & \\ & 0_{g-r}\end{smallmatrix}\right) \in M_{g}(\F) ,\quad r\in\{ 0,\ldots ,g\} .$$ 
Lifting these representatives (see \cite[Definition 5.2.4 and section 1.10]{NRS06}), we obtain representatives for the double coset of $\mathfrak{U}(M_{2g}(\F),\Phi^{(2g)})$ of the form
$$( \begin{pmatrix} 1_g & & & \\ & 1_g & & \\ & e_r & 1_g & \\ e_r & & & 1_g\end{pmatrix} ,
 \begin{pmatrix} & e_r &  & \\ 0_g & & & \\ & & 0_g & \\ &  & & 0_g
 \end{pmatrix})$$
where $r\in\{ 0,\ldots ,g\}$. 
\end{proof}

\begin{rem}
The first part of the proof also works for type $2^E_I$. The missing ingredient is the double coset decomposition for the group $O^+_{4g}(\F_2)$.
\end{rem}

\begin{rem}
As we mentioned earlier, when $\F$ is of odd characteristic, the group $A.B$ is the semidirect product $A\rtimes B$. Therefore in this case we could infer the hypothesis of the Lemma from \cite[Lemma 5.1]{BouganisMarzec}. 
\end{rem}

Now, following \cite[sections $46 - 47$]{Weil1}, we can write the representatives $\boldsymbol{\tau}_r$ in terms of generators of the groups $\mathfrak{U}(M_{2j}(\F),\Phi^{(2j)})$ for $j\in\{ r,g-r,g\}$. However, we will still use the group law \eqref{eq:operation in U}. Let
\[
 \begin{pmatrix} a_1 & a_2 & b_1 & b_2 \\
                a_3 & a_4 &  b_3 & b_4 \\
                c_1 & c_2 & d_1 & d_2 \\
                c_3 & c_4 & d_3 & d_4
      \end{pmatrix}\tilde{\otimes}
 \begin{pmatrix} a_1' & a_2' & b_1' & b_2' \\
                a_3' & a_4' &  b_3' & b_4' \\
                c_1' & c_2' & d_1' & d_2' \\
                c_3' & c_4' & d_3' & d_4'
      \end{pmatrix}   =
 \begin{pmatrix}
  a_1 & & a_2 & & b_1 & & b_2 & \\
 & a_1' & & a_2' & & b_1' & & b_2' \\
 a_3 & & a_4 & &  b_3 & & b_4 & \\
              &  a_3' & & a_4' &  & b_3' & & b_4' \\
                 c_1 & & c_2 & & d_1 & & d_2 & \\               
               & c_1' & & c_2' & & d_1' & & d_2' \\
                 c_3 & & c_4 & & d_3 & & d_4 & \\
               & c_3' & & c_4' & & d_3' & & d_4'
      \end{pmatrix}      
\]
where the elements $x\in\{ a_i,b_i,c_i,d_i: i\in\{ 1,2,3,4\}\}$ are square matrices of size $r$, and $x'$ are of size $g-r$.
We consider an embedding 
\[
\mathfrak{U}(M_{2r}(\F),\Phi^{(2r)}) \times \mathfrak{U}(M_{2g-2r}(\F),\Phi^{(2g-2r)}) \hookrightarrow \mathfrak{U}(M_{2g}(\F),\Phi^{(2g)})
\]
\[
((\sigma ,f), (\sigma',f')) \mapsto (\sigma ,f) \otimes (\sigma',f'),\quad\mbox{where}\quad
(\sigma ,f) \otimes (\sigma',f') =(\sigma \,\tilde{\otimes}\,\sigma', f\,\tilde{\otimes}\, f') .
\]
In particular, if $(\sigma',f')=(1_{4g-4r}, 0_{4g-4r})$ is the identity element, we obtain an embedding of $\mathfrak{U}(M_{2r}(\F),\Phi^{(2r)})$ into $\mathfrak{U}(M_{2g}(\F),\Phi^{(2g)})$.

\begin{lem}\label{lem:decomposition}
The element $\boldsymbol{\tau}_r\in \mathfrak{U}(M_{2g}(\F),\Phi^{(2g)})$ is equal to the product
\begin{multline*}
(\begin{pmatrix} 1_g & & & e_r \\ & 1_g & e_r & \\ &  & 1_g & \\ & & & 1_g
\end{pmatrix} , 
\begin{pmatrix} 0_g & & & \\ & 0_g & & \\ &  & & e_r \\ & & 0_g & 
\end{pmatrix} )\\ \left(
( \begin{pmatrix} & & & -1_r \\ &  & -1_r & \\ & 1_r &  & \\ 1_r & & & 
\end{pmatrix} ,
\begin{pmatrix} & & -1_r &  \\ &  &  & -1_r \\ 0_r & & & \\ & 0_r & & 
\end{pmatrix} )
 ( \begin{pmatrix} 1_r & & & 1_r \\ & 1_r & 1_r & \\ &  & 1_r & \\ & & & 1_r
\end{pmatrix} ,
\begin{pmatrix} 0_r & & & \\ & 0_r & & \\ &  & & 1_r \\ & & 0_r & 
\end{pmatrix} )\right.\\
\left. \otimes ( \begin{pmatrix} 1_{g-r} & & & \\ &  1_{g-r} & & \\ & &  1_{g-r} & \\ &  & &  1_{g-r} \end{pmatrix} , 0_{4g-4r} )\right) ,
\end{multline*}
that is,
\begin{equation*}
\boldsymbol{\tau}_r=d(1_{2g},\(\begin{smallmatrix} & e_r \\ 0_g &   
\end{smallmatrix}\))\left( H_{1_{2r},-\nu,\nu}\, d((1_{2r},\(\begin{smallmatrix} & 1_r \\ 0_r &   
\end{smallmatrix}\))\otimes d(1_{2g-2r},0_{2g-2r})\right)
\end{equation*}
where $\nu=\(\begin{smallmatrix} & 1_r \\ 1_r &  \end{smallmatrix}\)$.
\end{lem}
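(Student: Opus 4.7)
The plan is to verify the identity by a direct computation using the group law \eqref{eq:operation in U}, the bracket formula \eqref{eq:bracket}, and the fact that $\tilde{\otimes}$ defines a group homomorphism $\mathfrak{U}(M_{2r}(\F),\Phi^{(2r)}) \times \mathfrak{U}(M_{2(g-r)}(\F),\Phi^{(2(g-r))}) \to \mathfrak{U}(M_{2g}(\F),\Phi^{(2g)})$. Each of the three factors on the right-hand side is recognised as a standard generator: the first is $d(1_{2g}, M_1)$ with $M_1 = \left(\begin{smallmatrix} 0 & e_r \\ 0 & 0 \end{smallmatrix}\right) \in \Phi^{(2g)}$; the second is the hyperbolic generator $H_{1_{2r}, -\nu, \nu}$ in the smaller group $\mathfrak{U}(M_{2r}(\F), \Phi^{(2r)})$; the third is $d(1_{2r}, M_2)$ with $M_2 = \left(\begin{smallmatrix} 0 & 1_r \\ 0 & 0 \end{smallmatrix}\right)$; and the last two are then extended via $\tilde{\otimes}$ with the identity element $d(1_{2(g-r)}, 0)$.

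I would first verify the symplectic (first) component. Unpacking the definition of $d(u,\phi)$ and using that $e_r$ is symmetric, one obtains $\lambda^{(2g)}(M_1) = \left(\begin{smallmatrix} 0 & e_r \\ e_r & 0 \end{smallmatrix}\right)$, so the first factor's symplectic part agrees with the $\sigma_A$ displayed. A similar unpacking, together with the relation $u_\iota v_\iota = \iota$, identifies the $4r \times 4r$ matrices shown for the middle and last factors. Multiplying those two matrices in $\Sp_{4r}(\F)$ by direct block computation and then forming the $\tilde{\otimes}$ with $1_{4g-4r}$ produces a $4g \times 4g$ matrix whose $4 \times 4$ block expression has entries in $\{0_g,\, e_r,\, 1_g - e_r,\, 1_g\}$. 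Left-multiplying this by $\sigma_A$ and using the elementary identities $e_r^2 = e_r$ and $e_r(1_g - e_r) = 0$ collapses the product exactly to $\sigma_{\boldsymbol{\tau}_r}$.

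For the quadratic-form (second) component, I would apply the group law twice: first, compute $f_B[\sigma_C] + f_C$ inside $\mathfrak{U}(M_{2r}(\F),\Phi^{(2r)})$ via the bracket formula \eqref{eq:bracket}; then form its $\tilde{\otimes}$ with $0_{4g-4r}$; finally apply the outer bracket $f_A[\,\cdot\,]$ against the symplectic part already computed. A useful short-cut is that the identity \eqref{eq:U-identity} determines the quadratic-form part of any element of $\mathfrak{U}(M_{2g}(\F),\Phi^{(2g)})$ from its symplectic part up to $\ker\lambda^{(2g)} \oplus \ker\lambda^{(2g)}$; hence once the symplectic side is settled, it remains only to check that the $\ker\lambda^{(2g)}$-ambiguity is resolved in the way prescribed by the displayed $f_{\boldsymbol{\tau}_r}$. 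The main obstacle is the careful bookkeeping demanded by the bracket operation: both the symplectic matrices and the quadratic forms carry a two-level block structure ($2 \times 2$ blocks of size $2g$, each further partitioned along the $r/(g-r)$ decomposition), and every term appearing in \eqref{eq:bracket} --- in particular $\T{a}\lambda(\phi_1)b$, $\T{a}md$, $\T{c}\lambda(\phi_2)d$, $\T{c}\T{m}b$, and the symmetrisations $\{\!\{\T{a}mc\}\!\}$, $\{\!\{\T{b}md\}\!\}$ --- must be tracked through this structure. Once this bookkeeping is carried out, the remaining verification is essentially mechanical.
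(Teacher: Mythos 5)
Your proposal is correct and follows essentially the same route as the paper: a direct verification of the identity via the group law \eqref{eq:operation in U}, the bracket formula \eqref{eq:bracket}, and the $\tilde{\otimes}$ embedding, checking the symplectic and quadratic-form components in turn (the paper carries out one sample block multiplication explicitly and leaves the remaining bookkeeping to the reader). Your additional observation that \eqref{eq:U-identity} pins down the quadratic-form part up to $\ker\lambda^{(2g)}$ once the symplectic part is settled is a legitimate and mildly labour-saving refinement, but it does not change the nature of the argument.
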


\begin{proof}
This is a direct computation. We decided to include it here to help the reader understand quite general formulas given in \eqref{eq:bracket}. As an example we compute the product of the two elements in the second line. We claim that it is  
\begin{equation}\label{proof2:matrix} 
(\begin{pmatrix} & & & -1_r \\ &  & -1_r & \\ & 1_r & 1_r & \\ 1_r & & & 1_r
\end{pmatrix} ,
\begin{pmatrix} 0_g & & -1_r &  \\ & 0_g &  & -1_r \\  & & 0_r & -1_r \\ &  & & 0_r
\end{pmatrix} ) .
\end{equation}
By the formula \eqref{eq:bracket}, it suffices to verify that 
$$\begin{pmatrix} & & -1_r &  \\ &  &  & -1_r \\ 0_r & & & \\ & 0_r & & 
\end{pmatrix}\left[ \begin{pmatrix} 1_r & & & 1_r \\ & 1_r & 1_r & \\ &  & 1_r & \\ & & & 1_r
\end{pmatrix} \right] = \begin{pmatrix} 0_r & & -1_r &  \\ & 0_r &  & -1_r \\  & & 0_r & -2\, 1_r \\ &  & & 0_r
\end{pmatrix} .$$
We see that $\phi_1'=0_{2r}$ because $\phi_1=\phi_2=c=0_{2r}$, but
$$\phi_2'=0_{2r}+0_{2r}+\{\!\{ \begin{pmatrix}  & 1_r \\  1_r & \end{pmatrix} \begin{pmatrix}  -1_r & \\ & -1_r \end{pmatrix}\begin{pmatrix} 1_r & \\ & 1_r \end{pmatrix}\}\!\} =\{\!\{ \begin{pmatrix}  & -1_r \\  -1_r & \end{pmatrix}\}\!\} =\begin{pmatrix} 0_r & -2\, 1_r \\ &  0_r
\end{pmatrix}$$
(see \cite[Chapter 1.10]{NRS06} and take $\tau$ to be the transposition) and $m'=\begin{pmatrix}  -1_r & \\ & -1_r \end{pmatrix}$ because $\lambda(0_{2r})=0_{2r}$.
Now, taking the embedding, that is, applying $\otimes (1_{4g-4r},0_{4g-4r})$ to the element \eqref{proof2:matrix}, we obtain 
$$( \begin{pmatrix} e_r' & & & -e_r \\ & e_r' & -e_r & \\ & e_r & 1_g & \\ e_r & & & 1_g
\end{pmatrix} ,
\begin{pmatrix}  0_g & & -e_r &  \\ &  0_g &  & -e_r \\ & & 0_g & -e_r \\ &  &  &  0_g \end{pmatrix} ),$$
where $e_r'=\left(\begin{smallmatrix} 0_r & \\ & 1_{g-r}\end{smallmatrix}\right) =1_g -e_r\in M_{g}(\F)$.

We leave it to the reader to verify that when we multiply the above element on the left by the remaining element from the parabolic subgroup, we obtain $\boldsymbol{\tau}_r$.
\end{proof}

Passing to the Clifford-Weil group $\mathcal{C}_{2g}$, we see that $\pi(\boldsymbol{\tau}_r)$ acts on $X=\F^{2g}$ via
\begin{equation}\label{eq:Clifford-decomposition-to-gens}
\pi(\boldsymbol{\tau}_r)=d_\phi\left( h_{1_{2r},-\nu ,\nu} d_{\phi_r} \otimes d_{0_{2g-2r}}\right) ,
\end{equation}
where $\phi=\(\begin{smallmatrix} & e_r \\ 0_g &   
\end{smallmatrix}\)$, $\phi_r=\(\begin{smallmatrix} & 1_r \\ 0_r &   
\end{smallmatrix}\)$. 
More precisely: we write $X=\F^{2g}$ as $X=X_1\oplus X_2$, where $X_1=\F^r\times\{0\}^{g-r}\times \F^r\times\{ 0\}^{g-r}\cong\F^{2r}$ and $X_2$ is its complement in $X$ (such a decomposition is dictated by the image of the lower left corner of $\tau_r$ on $X$, i.e. the image of the map $X\ni x\mapsto x\(\begin{smallmatrix} & e_r \\ e_r &   
\end{smallmatrix}\)$). The operation $\otimes$ is defined so that $h_{1_{2r},-\nu ,\nu} d_{\phi_r}$ acts on $X_1$ and $d_{0_{2g-2r}}=\mathrm{id}_{|_{X_2}}$. More generally, for $x=x_1+x_2\in X_1\oplus X_2$ we define
$$x^{A\otimes B}=x_1^A\oplus x_2^B,$$ 
where the action is given by \eqref{generators}.

\subsection{Decomposition of the Eisenstein series}\label{sec:dec Eis series}
Lemma \ref{lem:decomposition} allows us to decompose the Siegel-type Eisenstein series $E_{2g}$ into summands indexed by $r\in\{ 0,\ldots ,g\}$. More precisely,
\[
E_{2g}(\underline{X}) =\frac{1}{|\mathcal{P}_{2g}\backslash\mathcal{C}_{2g}|}\sum_{\sigma\in \mathcal{P}_{2g}\backslash \mathcal{C}_{2g}} \left(\sum_{v\in\F^{2g}} X_v^N\right)^\sigma =  \frac{1}{|\mathcal{P}_{2g}\backslash\mathcal{C}_{2g}|} \sum_{\sigma\in \mathfrak{P}_{2g}\backslash \mathfrak{U}_{2g}} \left(\sum_{v\in\F^{2g}} X_v^N\right)^{\pi(\sigma)}
\]
where $\mathfrak{P}_{2g} = \mathfrak{P}(M_{2g}(\F),\Phi^{(2g)})$, $\mathfrak{U}_{2g} = \mathfrak{U}(M_{2g}(\F),\Phi^{(2g)})$, may be written as
\[ 
E_{2g}(\underline{X})=\frac{1}{|\mathcal{P}_{2g}\backslash\mathcal{C}_{2g}|} \sum_{r=0}^g E^{(r)}(\underline{X}),
\] 
where
\[ 
E^{(r)}(\underline{X}) = \sum_{\sigma} \left(\sum_{v\in\F^{2g}} X_v^N\right)^{\pi(\sigma)},\qquad\sigma \in \mathfrak{P}_{2g} \setminus \mathfrak{P}_{2g} \boldsymbol{\tau}_r \left(\mathfrak{U}_g \times \mathfrak{U}_g\right) . 
\]
The decomposition of $E_{2g}$ in case of type $q^E$ is analogous.\newline

We now introduce the notion of a Klingen-type parabolic subgroup $\mathfrak{P}^r_{g} \subseteq \mathfrak{U}_g$ for $r=0,\ldots g$, where $\mathfrak{P}^0_{g} = \mathfrak{P}_{g}$ and $\mathfrak{P}^g_{g} = \mathfrak{U}_g$. We first recall the classical maximal $P_g^r$ parabolics in $\Sp_g(\F)$ for $r\in\{ 0,\ldots, g\}$ (see \cite[page 545]{Shimura}). 
For a fixed $r$ we write every element in $\Sp_g(\F)$ as 
\[
 \begin{pmatrix} a_1 & a_2 & b_1 & b_2 \\
                a_3 & a_4 &  b_3 & b_4 \\
                c_1 & c_2 & d_1 & d_2 \\
                c_3 & c_4 & d_3 & d_4
      \end{pmatrix}
\]
where $a_1$ is a square matrix of size $r$ and $a_4$ of size $g-r$, and similarly for the other matrices. Then we define $P_g^r$ to be the subgroup of $\Sp_g$ consisting of elements of the form
\[
 \begin{pmatrix} a_1 & 0 & b_1 & b_2 \\
                a_3 & a_4 &  b_3 & b_4 \\
                c_1 & 0 & d_1 & d_2 \\
                0 & 0 & 0 & d_4
      \end{pmatrix}
\]
In particular, $P_g^g = \Sp_g$ and $P_g^0 = P_g$ in the notation we used above. We now extend this definition to the hyperbolic co-unitary group. Using the fact that 
$$\mathfrak{U}(M_g(\F),\Phi^{(g)})\cong (\ker\lambda^{(g)}\oplus\ker\lambda^{(g)} ).\mathcal{G}_g,$$
we define $\mathfrak{P}_g^r$ as the inverse image of $P_g^r$ in $\mathfrak{U}_g$. Then 
\[
\mathfrak{P}_g^r \cong M.P_g^r
\]
for some subgroup $M$ of $\ker\lambda^{(g)}\oplus\ker\lambda^{(g)}$. Of course, in the case $q^E$ we have $\mathfrak{P}_g^r \cong P_g^r$ and in the case $q^E_{1}$ we have that  $M= \F^r \oplus F^g$ (see \cite{BouganisMarzec}).

The following Lemma can be shown similarly to \cite[Lemma 5.2]{BouganisMarzec}.
\begin{lem}With notation as above we have
\[
\mathfrak{P}_{2g} \boldsymbol{\tau}_r \,\,\left(\mathfrak{U}_g \times \mathfrak{U}_g\right) = \bigsqcup_{\beta,\gamma,\xi} \mathfrak{P}_{2g} \boldsymbol{\tau}_r ((\beta \times (\xi \times 1_{2g-2r}) \gamma)=\bigsqcup_{\beta,\gamma,\xi} \mathfrak{P}_{2g} \boldsymbol{\tau}_r ( (\xi \times 1_{2g-2r}) \beta \times \gamma),
\]
where $\xi$ runs over $\mathfrak{U}_r$, $\beta, \gamma$ over $\mathfrak{P}^r_{g} \setminus \mathfrak{U}_g$. 
\end{lem}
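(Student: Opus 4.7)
The plan is to reduce the statement to the analogous decomposition for the symplectic group and then lift back to the hyperbolic co-unitary setting. The author's explicit reference to \cite[Lemma 5.2]{BouganisMarzec} indicates that the purely symplectic version of this decomposition, with $P_{2g}\tau_r$ and $\Sp_{2g}(\F)\times\Sp_{2g}(\F)$ replacing the co-unitary objects, is already established. I would first apply the projection $\phi:\mathfrak{U}_g\to\Sp_{2g}(\F)$ (and correspondingly $\mathfrak{U}_{2g}\to\Sp_{4g}(\F)$) to reduce to that symplectic identity, which is valid because the kernel of $\phi$ sits inside $\mathfrak{P}_{2g}$ (compare the top row of the commutative diagram in the proof of Lemma \ref{lem:double-coset-decomp}). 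Then the remaining work is to describe the fibre of this reduction.

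Concretely, I would compute the right stabilizer of $\mathfrak{P}_{2g}\boldsymbol{\tau}_r$ in $\mathfrak{U}_g\times\mathfrak{U}_g$, i.e.\ the set of pairs $(h_1,h_2)$ with $\boldsymbol{\tau}_r\bigl((h_1,h_2)\bigr)\in\mathfrak{P}_{2g}\boldsymbol{\tau}_r$. Writing $\boldsymbol{\tau}_r$ in the block form given in Lemma \ref{lem:double-coset-decomp} and using the decomposition $e_r=\left(\begin{smallmatrix}1_r & \\ & 0_{g-r}\end{smallmatrix}\right)$, a direct block computation on the lower-left $2g\times 2g$ corner of $\boldsymbol{\tau}_r\cdot\phi(h_1\times h_2)$ forces $\phi(h_1),\phi(h_2)\in P_g^r$, so that $h_1,h_2\in\mathfrak{P}_g^r$. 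There is, however, residual freedom in the ``upper-left'' $r\times r$ block: the entries of $h_1$ and $h_2$ that are paired by the $e_r$ blocks of $\boldsymbol{\tau}_r$ are tied together, and together they range over a copy of $\mathfrak{U}_r$ sitting diagonally. Quotienting the total group $\mathfrak{U}_g\times\mathfrak{U}_g$ by this stabilizer yields precisely a parametrisation by triples $(\beta,\gamma,\xi)\in(\mathfrak{P}_g^r\backslash\mathfrak{U}_g)\times(\mathfrak{P}_g^r\backslash\mathfrak{U}_g)\times\mathfrak{U}_r$, which gives the first displayed equality.

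For the second equality, I would use the symmetry of $\boldsymbol{\tau}_r$ with respect to the two copies of $\F^g$: conjugating by the involution that swaps them lies in $\mathfrak{P}_{2g}$ (on the left of $\boldsymbol{\tau}_r$), so the $\mathfrak{U}_r$ factor may be placed equivalently on either coordinate of $\mathfrak{U}_g\times\mathfrak{U}_g$. Disjointness in both decompositions follows from the symplectic statement (via $\phi$) together with the fact that the fibre over each symplectic coset is faithfully parametrised by the choice of $\xi\in\mathfrak{U}_r$ (no two distinct $\xi$'s can produce the same coset because the $e_r$ blocks of $\boldsymbol{\tau}_r$ inject $\mathfrak{U}_r$ into $\mathfrak{U}_{2g}/\mathfrak{P}_{2g}$ in a nontrivial way).

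The main obstacle, as usual in this framework, is the bookkeeping needed to pass from the symplectic identity to the co-unitary one in the types $2^E_{II}$ and $q^E_1$, where $\ker\lambda^{(g)}\neq 0$. One has to verify that every ``quadratic component'' in $\ker\lambda^{(2g)}\oplus\ker\lambda^{(2g)}$ produced by expanding $\boldsymbol{\tau}_r(h_1,h_2)$ via the group law \eqref{eq:operation in U} can be absorbed into $\mathfrak{P}_{2g}$ from the left. This amounts to checking that the full preimage of $P_{2g}(\F)\cap\mathcal{G}_{2g}$ under $\phi$ is contained in $\mathfrak{P}_{2g}$, which is immediate from the isomorphism $\mathfrak{P}(M_{2g}(\F),\Phi^{(2g)})\cong \ker(\lambda^{(2g)}).(P_{2g}(\F)\cap\mathcal{G}_{2g})$ recorded in section \ref{sec:double-coset-decomp}. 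With this bookkeeping in hand, the argument parallels \cite[Lemma 5.2]{BouganisMarzec} verbatim.
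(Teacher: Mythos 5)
Your overall strategy --- project to the symplectic groups, invoke the known decomposition of $P_{2g}(\F)\backslash\Sp_{4g}(\F)/(\Sp_{2g}(\F)\times\Sp_{2g}(\F))$ into left cosets via the stabilizer of $P_{2g}(\F)\tau_r$, and then lift back to the hyperbolic co-unitary groups --- is the intended one: the paper gives no proof of this lemma beyond the remark that it "can be shown similarly to \cite[Lemma 5.2]{BouganisMarzec}", and your outline parallels the detailed argument the paper does give for Lemma \ref{lem:double-coset-decomp}. The stabilizer computation in the symplectic setting (pairs in $P_g^r\times P_g^r$ whose $\Sp_{2r}$-components agree) and the symmetry argument for the second equality are both fine.

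The genuine problem is in your treatment of what you correctly identify as the main obstacle. Your claim that ``the full preimage of $P_{2g}(\F)\cap\mathcal{G}_{2g}$ under $\phi$ is contained in $\mathfrak{P}_{2g}$'' is false for the types $2^E_{II}$ and $q^E_1$: the kernel of $\phi$ is $\ker\lambda^{(2g)}\oplus\ker\lambda^{(2g)}$, whereas $\mathfrak{P}_{2g}\cong\ker\lambda^{(2g)}.(P_{2g}(\F)\cap\mathcal{G}_{2g})$ contains only \emph{one} of the two copies. This is exactly why the exact sequence in the proof of Lemma \ref{lem:double-coset-decomp} has the nontrivial leftmost term $\ker\lambda^{(2g)}\backslash(\ker\lambda^{(2g)}\oplus\ker\lambda^{(2g)})$: the fibre of $\mathfrak{P}_{2g}\backslash\mathfrak{U}_{2g}\to P_{2g}(\F)\backslash\Sp_{4g}(\F)$ has $|\ker\lambda^{(2g)}|$ elements, not one. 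If your absorption claim were true, the double coset $\mathfrak{P}_{2g}\boldsymbol{\tau}_r(\mathfrak{U}_g\times\mathfrak{U}_g)$ would contain only $|P_g^r\backslash\Sp_{2g}(\F)|^2\cdot|\Sp_{2r}(\F)|$ left cosets, which is smaller by the factor $|\ker\lambda^{(2g)}|$ than the count $|\mathfrak{P}_g^r\backslash\mathfrak{U}_g|^2\cdot|\mathfrak{U}_r|$ asserted by the lemma (compare the case $r=g$, used in Proposition \ref{prop:r=g}, where the lemma requires $|\mathfrak{U}_g|$ distinct cosets $\mathfrak{P}_{2g}\boldsymbol{\tau}_g(\xi\times 1_{2g})$ while the symplectic picture sees only $|\Sp_{2g}(\F)|$ of them). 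The correct argument runs the other way: one must show that the quadratic components which \emph{cannot} be absorbed into $\mathfrak{P}_{2g}$ on the left are exactly accounted for by the $\ker\lambda^{(g)}$-parts of $\beta$, $\gamma$ and $\xi$ acting on the right (as in the computation of $\pi^{-1}(P_{2g}\left(\begin{smallmatrix}a&b\\c&d\end{smallmatrix}\right))$ and the compatibility \eqref{eq:j-compatibility} in the proof of Lemma \ref{lem:double-coset-decomp}), and that distinct such components yield distinct left cosets. Without this, both the covering and the disjointness in the displayed union are unjustified, and the resulting miscount would propagate into the constant $c$ of the main theorem.
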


Using this Lemma we can now write 
\[
E^{(r)}(\underline{X}) = \sum_{\beta,\gamma,\xi} \left(\sum_{v\in\F^{2g}} X_v^N\right)^{\pi(\boldsymbol{\tau}_r)\pi(\sigma_{\xi,\beta,\gamma})}
\]
where $\sigma_{\xi,\beta,\gamma} = (\xi \times 1_{2g-2r}) \beta \times \gamma$ and $\pi(\boldsymbol{\tau}_r) =d_\phi\left( h_{1_{2r},-\nu ,\nu} d_{\phi_r} \otimes d_{0_{2g-2r}}\right)$. A similar identity holds in case of type $q^E$.

\subsection{The case $r<g$}\label{sec:r<g}
Recall that the map 
\[
D: \C[X_{v}: v\in \F^{2g}] \rightarrow \C[x_{v_1}: v_1\in \F^g] \times \C[y_{v_2}: v_2\in \F^g],
\]
is induced by $D(X_{v_1v_2}) = x_{v_1}y_{v_2}$. 

\begin{prop}\label{prop:r<g}
Let $f\in\C[y_{v_2}: v_2\in \F^{g}]_N$ be a cusp form of genus $g$. Then taking the inner product in the $\underline{y}$ variable we have that 
\[
\left( D(E^{(r)}), f\right)_g = 0 
\]
for all $r < g$.
\end{prop}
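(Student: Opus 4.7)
The plan is to realize $D(E^{(r)})$ (in the $\underline{y}$-variable) as a Klingen--Eisenstein-type series and invoke the adjointness $(\varphi_{g,j}(q),p)_g=(q,\Phi_{g,j}(p))_{g-j}$ from section \ref{sec:cusp forms}, combined with $\Phi_{g,g-r}(f)=\Phi_{g-1,g-r-1}\circ\cdots\circ\Phi_{g,1}(f)=0$ for cuspidal $f$.

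First, by Lemma \ref{lem:decomposition} and the commutativity of the $\underline{x}$- and $\underline{y}$-actions, one writes
\[
D(E^{(r)})(\underline{x},\underline{y})=\sum_{\gamma\in\mathfrak{P}^r_g\setminus\mathfrak{U}_g}H(\underline{x},\underline{y})^{\pi(\gamma)_y}
\]
where $H:=\sum_{\beta,\xi}D(G^{\pi(\boldsymbol{\tau}_r)})^{\pi((\xi\times 1_{2g-2r})\beta)_x}$. By the invariance of the hermitian form and of $f$ under the Clifford--Weil group, $(H^{\pi(\gamma)_y},f)_g=(H,f)_g$ for every $\gamma$, reducing the problem to showing $(H,f)_g=0$. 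Unpacking $\pi(\boldsymbol{\tau}_r)$ via \eqref{eq:Clifford-decomposition-to-gens} and using that $d_{0_{2g-2r}}$ acts trivially on the $X_2$-complement, a direct computation gives
\[
D(G^{\pi(\boldsymbol{\tau}_r)})=c_0\sum_{c,d\in\F^{g-r}}Q\bigl(x_{(\cdot,c)},y_{(\cdot,d)}\bigr)
\]
for a bi-homogeneous genus-$r$ polynomial $Q\in\C[\tilde{x}_w,\tilde{y}_w:w\in\F^r]$ whose coefficients are independent of $c$ and $d$. In particular, the $\underline{y}$-part of $H$ is a linear combination of polynomials $Y_m(\underline{y})=\sum_{d\in\F^{g-r}}\prod_{w\in\F^r}y_{(w,d)}^{m_w}$, whose $d$-summand is supported on the affine set $\F^r\times\{d\}\subset\F^g$.

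The $d=0$ summand is supported on the proper linear subspace $\F^r\times\{0\}$, so it belongs to $\varphi_{g,g-r}(\C[\tilde{y}_w])$ and pairs to zero with $f$ via the adjointness and cuspidality. For $d\neq 0$ with $r<g-1$, the affine support $\F^r\times\{d\}$ spans a subspace of $\F^g$ of dimension $r+1<g$, so by the $\GL_g(\F)$-invariance of $f$ via $m_u\in\mathcal{C}_g$ the coefficient in $f$ of the corresponding monomial vanishes. The remaining boundary case $r=g-1$, in which the affine support $\F^{g-1}\times\{d\}$ spans all of $\F^g$, is the main obstacle: here one must exploit the invariance of $f$ under the partial Fourier transforms $h_\iota$ and the quadratic phases $d_\phi$ in $\mathcal{C}_g$ together with the Gauss-sum structure coming from the $a,b$-averaging inside $Q$ to force $\sum_{d\neq 0}c_{M_d}(f)=0$ for the relevant monomials $M_d$, thereby completing $(H,f)_g=0$.
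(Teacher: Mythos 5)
Your setup is sound and, up to a point, parallels the paper's: the reduction of $\left( D(E^{(r)}),f\right)_g$ to $(H,f)_g$ via the unitarity of $\mathcal{C}_g$ and the invariance of $f$, and the identification of the $\underline{y}$-part of $D\bigl(G^{\pi(\boldsymbol{\tau}_r)}\bigr)$ as a sum of pieces supported on the affine slices $\F^r\times\{d\}$, $d\in\F^{g-r}$, are exactly what happens in the paper (there $\mathcal{D}=\sum_{v_2'}\phi_{g,g-r}^{(v_2')}(P)$). Your treatment of $d=0$ is correct, and your $\GL_g(\F)$-argument for $d\neq 0$, $r<g-1$ does work: the span of $\F^r\times\{d\}$ has dimension $r+1<g$, so a suitable $m_u$ moves the supporting monomials into the kernel of $\Phi_{g,1}$.

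However, the case $r=g-1$, $d\neq 0$ is a genuine gap, not a routine loose end: you explicitly defer it, and the mechanism you propose (Gauss-sum cancellations from the $h_\iota$ and $d_\phi$ forcing $\sum_{d\neq 0}c_{M_d}(f)=0$) is not what makes it work and would be hard to carry out, since the individual coefficients $c_{M_d}(f)$ need not vanish for an arbitrary $\mathcal{C}_g$-invariant polynomial and the relevant cancellation is monomial-by-monomial, not a sum over $d$. The missing idea is that \emph{translations} $y_v\mapsto y_{v+t}$ belong to $\mathcal{C}_g$ for the types in question (they arise from $\ker\lambda^{(g)}\cong\F^g$ inside the hyperbolic co-unitary group, e.g.\ as Fourier conjugates of the additive $d_\phi$'s). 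The paper exploits this by introducing twisted Siegel operators $\Phi_{g,j}^{(\mathbf{w})}$ and lifts $\phi_{g,j}^{(\mathbf{w})}$ for every word $\mathbf{w}\in\F^{j}$, proving the adjunction $(\phi_{g,j}^{(\mathbf{w})}(q),p)_g=(q,\Phi_{g,j}^{(\mathbf{w})}(p))_{g-j}$, and showing that a cusp form satisfies $\Phi_{g,j}^{(\mathbf{w})}(f)=0$ for \emph{all} $\mathbf{w}$: if some $\Phi_{g,1}^{(w)}(f)\neq 0$, translating by $-w$ in the last coordinate (an element of $\mathcal{C}_g$ fixing $f$) would contradict $\Phi_{g,1}(f)=0$. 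This disposes of every slice $\F^r\times\{d\}$ uniformly, including your boundary case, and also makes your separate $\GL_g$-argument unnecessary. For type $q^E$ (where $\ker\lambda^{(g)}=0$ and there are no translations) only the slice $d=0$ ever occurs, so no extra work is needed there. To repair your proof, replace the case analysis on $d$ by this single translation-invariance lemma.
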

The proof of this proposition will take the rest of this section, but before we can start with the proof we need to establish some facts about the notion of a cusp form and its relation to Siegel's $\Phi$-operator.\newline

We first generalise the notion of Siegel $\Phi$-operator presented in section \ref{sec:cusp forms} after \cite{N09}. For $0 \leq j \leq g$ and $\mathbf{w} \in \F^{j}$ we define 
\[
\Phi_{g,j}^{(\mathbf{w})} : \C[x_v: v\in \F^g] \rightarrow \C[x_v: v\in \F^{g-j}]
\]
by setting
\[
\Phi_{g,j}^{(\mathbf{w})} (x_{(v_1,\ldots,v_g)}) = \begin{cases}
    x_{(v_1,\ldots,v_{g-j})}, & \text{if}\,\,\, (v_{g-j+1}, \ldots,v_g) = \mathbf{w}\\
    0, & \text{otherwise}
\end{cases}
\]
and extending it to a ring homomorphism. Similarly we define the lift operator 
\[
\phi_{g,j}^{(\mathbf{w})} (x_{(v_1,v_2,\ldots,v_{g-j})}) := x_{(v_1,v_2,\ldots,v_{g-j}, \mathbf{w})}.
\]
Then
$\Phi_{g,j}^{(\mathbf{w})} \circ \phi_{g,j}^{(\mathbf{w})}$ is the identity map on the space $\C[x_v: v\in \F^{g-j}]$. The case discussed in \cite{N09} corresponds to $\mathbf{w}$ being the all zero word. For homogeneous polynomials $p \in \C[x_v: v\in \F^{g}]_N$ and $q \in \C[x_v: v\in \F^{g-j}]_N$ of degree $N$ we still have the equality of the inner products:
\[
(\phi_{g,j}^{(\mathbf{w})}(q), p)_g = (q, \Phi_{g,j}^{(\mathbf{w})}(p))_{g-j}.
\]
Indeed, it is clear from the definition of the inner product that an orthogonal basis of the underlying space consists of monomials of degree $N$, and for them the equality holds.

\begin{lem}
If $f\in\C[x_v:v\in\F_q^g]_N$ is a cusp form of genus $g$, then $\Phi_{g,j}^{(\mathbf{w})}(f) = 0$ for all $\mathbf{w} \in \F_q^{j}$ and all $j\in\{ 1,\ldots ,g\}$.
\end{lem}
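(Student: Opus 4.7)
The plan is first to reduce the statement to the case $j = 1$. For $\mathbf{w} = (w_{g-j+1}, \ldots, w_g) \in \F_q^j$ one has the factorisation
\[
\Phi_{g, j}^{(\mathbf{w})} = \Phi_{g-j+1, 1}^{(w_{g-j+1})} \circ \cdots \circ \Phi_{g, 1}^{(w_g)},
\]
so the vanishing of $\Phi_{g, j}^{(\mathbf{w})}(f)$ follows as soon as $\Phi_{g, 1}^{(w)}(f) = 0$ for every $w \in \F_q$. The case $w = 0$ is exactly the defining condition of a cusp form, so only $w \neq 0$ will require genuine work.

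For $w \neq 0$ I would exploit the $\mathcal{C}_g$-invariance of $f$ through the Fourier-type generator $h_\iota$ associated with the rank-one symmetric idempotent $\iota = \diag(0, \ldots, 0, 1) \in M_g(\F)$. Using the explicit formula for $h_\iota$, a direct computation on generators yields that the ring homomorphism $T_w := \Phi_{g, 1}^{(w)} \circ h_\iota$ sends $x_v \mapsto q^{-1/2} \exp(2 \pi i \beta(w, v_g))\, x_{v'}$. Since $h_\iota f = f$, this gives
\[
\Phi_{g, 1}^{(w)}(f) = T_w(f),
\]
and $T_w$ factors as $T_w = T_0 \circ D_w$, where $D_w$ is the diagonal operator $D_w(x_v) = \exp(2 \pi i \beta(w, v_g)) x_v$ and $T_0 = \Phi_{g, 1}^{(0)} \circ h_\iota$. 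In particular, applying $T_0 = \Phi_{g,1}^{(0)} \circ h_\iota$ to $f$ and using $h_\iota f = f$ one gets $T_0(f) = \Phi_{g, 1}^{(0)}(f) = 0$ by the cusp condition.

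The key step is then to recognise $D_w$ as an element of $\mathcal{C}_g$, for then $\mathcal{C}_g$-invariance of $f$ gives $D_w f = f$, and hence $\Phi_{g, 1}^{(w)}(f) = T_0(D_w f) = T_0(f) = 0$. Concretely, $D_w = d_{\phi_w}$ where $\phi_w \in \Phi^{(g)}$ is the element whose only nonzero entry is the diagonal entry at position $g$, equal to the linear map $v_g \mapsto \beta(w, v_g)$. This linear form belongs to $\Phi$ for each of the three types admitting additive contributions: for $2_I^E$ it is $\phi$ itself (since $x^2/2 = x/2$ on $\F_2$), for $2_{II}^E$ it is $2\phi$, and for $q_1^E$ it is $\varphi_{0, w}$. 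This completes the proof for these three types.

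The hard part will be type $q^E$, whose $\Phi$ consists only of the quadratic forms $\varphi_a(x) = \Tr(a x^2)/p$, so $D_w$ is not directly in $\mathcal{C}_g$ by the above route. Here my plan is to use the $d_\phi$-invariance of $f$ for every $\phi \in \Phi^{(g)}$ with diagonal entry $\phi_g = \varphi_a$ and all other entries zero: comparing phases on monomials one sees that each $\prod_i x_{v_i}^{n_i}$ appearing in $f$ must satisfy $\sum_i n_i v_{i, g}^2 = 0$ in $\F_q$, so a monomial supported on the hyperplane $\{v_g = w\}$ with $w \neq 0$ requires $N w^2 = 0$, i.e.\ $p \mid N$. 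When $p \nmid N$ this immediately gives $\Phi_{g, 1}^{(w)}(f) = 0$. The residual case $p \mid N$ is the genuinely delicate one and would be treated by combining the off-diagonal ($m_{i, g}$) invariances of $f$, which further constrain the support of monomials in $\pi_w(f)$, with the representation $f = \sum_C a_C\, \cwe_g(C)$ and the identification $(w, \ldots, w) \in C \Leftrightarrow (1, \ldots, 1) \in C$ for $w \neq 0$, in order to reduce $\Phi_{g, 1}^{(w)}(f) = 0$ back to the cusp condition $\Phi_{g, 1}^{(0)}(f) = 0$.
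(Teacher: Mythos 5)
Your reduction to $j=1$, the $w=0$ case, and your treatment of the types $2_I^E$, $2_{II}^E$ and $q_1^E$ are correct and complete, and the route is essentially the Fourier dual of the paper's. The paper argues by invariance of $f$ under the translation $x_v\mapsto x_{v+(0,\ldots,0,-w)}$, which is precisely $h_\iota^{-1}d_{\phi_w}h_\iota$ in your notation, while you pre-compose $\Phi_{g,1}^{(w)}$ with $h_\iota$ and use invariance under the modulation $d_{\phi_w}$ itself. Your version is, if anything, the better justified one: the paper's parenthetical claim that translations ``belong to the parabolic subgroup'' cannot be right as stated (every element of $\mathcal{P}_g$ fixes $x_{(0,\ldots,0)}$, whereas a nontrivial translation does not), and you verify explicitly, type by type, that $\phi_w\in\Phi^{(g)}$ and hence that $d_{\phi_w}$ really lies in $\mathcal{C}_g$.

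The gap you flag for type $q^E$ with $p\mid N$ is genuine, and your proposed repair cannot be completed, because the statement is actually false in that residual case. For $w\neq 0$ one has $\Phi_{g,1}^{(w)}(\cwe_g(C))=\cwe_{g-1}(C)$ if $\mathbf{1}\in C$ and $0$ otherwise, so for $f=\sum_C a_C\,\cwe_g(C)$ the quantity $\Phi_{g,1}^{(w)}(f)=\sum_{C\ni\mathbf{1}}a_C\,\cwe_{g-1}(C)$ is a partial sum that the cusp condition $\sum_C a_C\,\cwe_{g-1}(C)=0$ does not control. Concretely, over $\F_3$ with $N=12$ take $f=\cwe_1(g_{12})-\cwe_1(C_4^{\oplus 3})$, where $g_{12}$ is the extended ternary Golay code (which contains $\mathbf{1}$, being the zero-sum extension of the perfect $[11,6,5]$ code) and $C_4$ is the tetracode (which does not contain $\mathbf{1}_4$, since $\mathbf{1}_4\cdot\mathbf{1}_4=1\neq 0$): this is a nonzero $\mathcal{C}_1$-invariant with $\Phi_{1,1}(f)=0$, yet $\Phi_{1,1}^{(1)}(f)=x_\circ^{12}\neq 0$. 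You should note, however, that the paper's own proof fails silently in exactly the same place (for $q^E$ one has $\ker\lambda^{(g)}=0$, so neither the modulation $d_{\phi_w}$ nor its Fourier conjugate, the translation, is available), and that this does not affect the main theorem: for type $q^E$ the Eisenstein series is built from $X_{(0,\ldots,0)}^N$ alone, so only the $\mathbf{w}=\mathbf{0}$ case of the lemma is ever invoked for that type.
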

\begin{proof}
By the assumption, $f$ is invariant under the action of the corresponding Clifford-Weil group and $\Phi_{g,1}(f) = 0$. That is, 
$$f(\underline{x})=\sum_{\substack{k_0,\ldots ,k_{q^g-1}\geq 0\\ k_0+\ldots +k_{q^g-1}=N}} a_{k_0,\ldots ,k_{q^g-1}}\prod_{\alpha\in\F_q^g} x_{\alpha}^{k_{\alpha}}$$
where $\Phi_{g,1}\left( \prod_{\alpha\in\F_q^g} x_{\alpha}^{k_{\alpha}}\right) = 0$ whenever $a_{k_0,\ldots ,k_{q^g-1}}\neq 0$. More precisely, in each term $\prod_{\alpha\in\F_q^g} x_{\alpha}^{k_{\alpha}}$ there is a variable $x_{\alpha}$ where $\alpha \in \F_q^g$ is of the form $(*,*,\ldots,*,a)$ for some $a \neq 0$.

It suffices to prove the hypothesis for $j=1$ because $\Phi^{(\mathbf{w})}_{g,j}=\Phi^{(w_j)}_{g-j+1,1}\circ\ldots\circ\Phi^{(w_2)}_{g-1,1} \circ \Phi^{(w_1)}_{g,1}$ for $\mathbf{w}=(w_j,\ldots ,w_2,w_1)\in \F_q^{j}$, $j\in\{ 1,\ldots ,g\}$. 
Suppose that $\Phi_{g,1}^{(w)}(f)\neq 0$ for some $w\in\F_q$. Then 
$\Phi^{(w)}_{g,1}\left( \prod_{\alpha\in\F_q^g} x_{\alpha}^{k_{\alpha}}\right) \neq 0$ for some term $\prod_{\alpha\in\F_q^g} x_{\alpha}^{k_{\alpha}}$. That is, in this term all the variables with non-trivial powers are of the form $(*,*,\ldots,w)$. But since $f$ is invariant under translations (which belong to the parabolic subgroup of $\mathcal{C}_g$), we may translate $f$ by $(*,*,\ldots,-w)$ to obtain another term of $f$ where all the variables are of the form $(*,*,\ldots,0)$. But then $\Phi_{g,1}(f) \neq 0$, contradiction.
\end{proof}

\begin{proof}[Proof of Proposition \ref{prop:r<g}] We first give the proof excluding the case of $q^E$ and at the end we explain how it needs to be modified to accommodate also this case. \newline

Due to the fact that the map $D$ commutes with the homomorphism $\Delta : \mathcal{C}_g \times \mathcal{C}_g \rightarrow \mathcal{C}_{2g}$, we have
\begin{align*}
D(E^{(r)}(\underline{X})) &= D\left( \sum_{\beta,\gamma,\xi} \left(\sum_{v\in\F^{2g}} X_v^N\right)^{\pi(\boldsymbol{\tau}_r)\pi(\sigma_{\xi,\beta,\gamma})}\right)\\
& = \sum_{\beta,\gamma,\xi} \left( D\left(\left(\sum_{v\in\F^{2g}} X_v^N\right)^{\pi(\boldsymbol{\tau}_r)}\right)\right)^{\pi((\xi \times 1_{2g-2r})\beta) \otimes \pi(\gamma)} 
\end{align*}
where
\[ \pi(\boldsymbol{\tau}_r) =d_\phi\left( h_{1_{2r},-\nu ,\nu} d_{\phi_r} \otimes d_{0_{2g-2r}}\right). \]
Recall that the elements $d_{\phi}$ act trivially on $X_v^N$ and in particular they act trivially on $\sum_{v\in\F^{2g}} X_v^N$. Hence 
\begin{equation} \label{general case}
\left(\sum_{v\in\F^{2g}} X_v^N\right)^{\pi(\boldsymbol{\tau}_r)} = 
\left(\sum_{v\in\F^{2g}} X_v^N\right)^{h_{1_{2r},-\nu ,\nu} d_{\phi_r} \otimes d_{0_{2g-2r}}}
\end{equation}
If we write $v = (v_1,v_1',v_2,v_2') \in \F^{2g}$ where $v_1,v_2 \in \F^{r}$ and $v_1',v_2' \in \F^{g-r}$, then $\left( X_v^N\right)^{h_{1_{2r},-\nu ,\nu} d_{\phi_r} \otimes d_{0_{2g-2r}}}$ is equal to
\[
\left(q^{-r}\sum_{w_1, w_2\in\F^{r} }\exp(2\pi i\beta^{(r)}(w_1,w_2))\exp(2\pi i\beta^{(r)}(w_1,v_2))\exp(2\pi i\beta^{(r)}(w_2,v_1)) X_{(w_1, v_1', w_2,v_2')}\right)^N .
\]
Hence
\begin{align*}
&\mathcal{D}:= D\left(\left(\sum_{v\in\F^{2g}} X_v^N\right)^{\pi(\boldsymbol{\tau}_r)}\right)\\
& = q^{-Nr}\sum_{(v_1,v_1',v_2,v_2') \in \F^{2g}}\left(\sum_{w_1, w_2\in\F^{r} }\exp\left( 2\pi i (\beta^{(r)}( w_1,w_2)+\beta^{(r)}(w_1,v_2)+\beta^{(r)}(w_2,v_1))\right) x_{(w_1, v_1')}y_{(w_2,v_2')}\right)^N\\
& = q^{-Nr}\sum_{(v_1',v_2') \in \F^{2g-2r}}\\
& \sum_{(v_1,v_2) \in \F^{2r}} \left(\sum_{w_1, w_2\in\F^{r} }\exp\left( 2\pi i (\beta^{(r)}( w_1,w_2)+\beta^{(r)}(w_1,v_2)+\beta^{(r)}(w_2,v_1))\right) \phi_{g,g-r}^{(v_1')}(x_{w_1})\phi_{g,g-r}^{(v_2')}(y_{w_2})\right)^N
\end{align*}
In particular, as a function of $\underline{y}$, $\mathcal{D}=\sum_{v_2' \in \F^{g-r}}\phi_{g,g-r}^{(v_2')}(P(\underline{y}))$ for the polynomial $P$ (also depending on the variable $\underline{x}$) as in the formula above. Now, taking the inner product with respect to the ``second variable'' $\underline{y}$, we obtain
\[
\left( D(E^{(r)}), f\right)_g = \left(\sum_{\beta,\gamma,\xi} \mathcal{D}^{\pi((\xi \times 1_{2g-2r})\beta ) \otimes \pi(\gamma)}, f \right)_g
 = \sum_{\beta,\xi}\left( \sum_{\gamma} \left( \mathcal{D}, f^{\pi(\gamma)^*} \right)_g \right)^{\pi((\xi \times 1_{2g-2r})\beta)},
\]
where for a matrix $A$, we denote by $A^*$ the complex conjugate transpose. 
But, as we have explained in section \ref{sec:cusp forms}, 
$\pi(\gamma)^* = \pi(\gamma)^{-1} \in \mathcal{C}_g$. Further, since $f$ is invariant under the action of $\mathcal{C}_g$, 
\begin{align*}
\left( D(E^{(r)}), f\right)_g & = |\mathfrak{P}_g^r\backslash\mathfrak{U}_g| \sum_{\beta,\xi} \left(\sum_{\gamma} \left( \mathcal{D}, f\right)_g\right)^{\pi((\xi \times 1_{2g-2r})\beta)}\\
& = |\mathfrak{P}_g^r\backslash\mathfrak{U}_g|\sum_{\beta,\xi} \left(\sum_{\gamma} \sum_{v_2' \in \F^{g-r}} \left( \phi_{g,g-r}^{(v_2')}(P), f\right)_g\right)^{\pi((\xi \times 1_{2g-2r})\beta)}\\
& = |\mathfrak{P}_g^r\backslash\mathfrak{U}_g|\sum_{\beta,\xi} \left(\sum_{\gamma} \sum_{v_2' \in \F^{g-r}} \left( P, \Phi_{g,g-r}^{(v_2')}(f) \right)_r\right)^{\pi((\xi \times 1_{2g-2r})\beta)}\\
& = 0
\end{align*}
by the assumption that $f$ is a cusp form of genus $g$ and $r<g$.

The proof in the case $q^E$ is essentially the same. The only difference comes from the definition of Eisenstein series. Namely one should replace the sum $\sum_{v \in \F^{2g}} X_v$ with the simple monomial $X_{(0,\ldots ,0)}$ and leave the rest without any change. In particular, the equation $\eqref{general case}$ is still valid because the element $d_{\phi}$ acts trivially on $X_{(0,\ldots ,0)}$, and the vector $v = (v_1,v_1',v_2,v_2') \in \F^{2g}$ just under this equation is simply the zero vector.
\end{proof}

\subsection{The case $r=g$}\label{sec:r=g} In this subsection we establish the following Proposition.
\begin{prop}\label{prop:r=g} Let $f\in\C[y_{v_2}: v_2\in \F_q^{g}]_N$ be a cusp form of genus $g$. If $f$ is associated with type $2^E_{II}$ or $q^E_1$, then
\[
\left( D(E^{(g)}), f\right)_g = q^{2g-Ng/2} N! |\mathfrak{U}_g| \bar{f} ,
\]
where $|\mathfrak{U}_g| = q^{g^2+2g}\prod_{i=1}^{g}(q^{2i}-1)$.
If $f$ is associated with type $q^E$, then
\[
\left( D(E^{(g)}), f\right)_g = q^{-Ng/2} N! |\mathfrak{U}_g| \bar{f} ,
\]
where $|\mathfrak{U}_g| = q^{g^2}\prod_{i=1}^{g}(q^{2i}-1)$.
\end{prop}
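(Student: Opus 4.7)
The plan is to exploit the fact that $\mathfrak{P}_g^g=\mathfrak{U}_g$, so in the decomposition of Section \ref{sec:dec Eis series} only the index $\xi$ ranges non-trivially and
\[
D(E^{(g)})(\underline{x},\underline{y}) \;=\; \sum_{\xi \in \mathfrak{U}_g} \mathcal{D}(\underline{x},\underline{y})^{\pi(\xi)\otimes 1},
\qquad \mathcal{D}:=D\bigl((\textstyle\sum_v X_v^N)^{\pi(\boldsymbol{\tau}_g)}\bigr).
\]
I will compute $(\mathcal{D},f)_g$ as a polynomial in $\underline{x}$ and observe that it is already $\mathcal{C}_g$-invariant and equal to $q^{2g-gN/2}\, N!\, \bar{f}(\underline{x})$, after which the sum over $\xi$ simply multiplies by $|\mathfrak{U}_g|$.

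First I would make $\mathcal{D}$ explicit. Lemma \ref{lem:decomposition} yields $\pi(\boldsymbol{\tau}_g)=d_\phi\, h_{1_{2g},-\nu,\nu}\, d_{\phi_g}$ with $\phi=\phi_g\in\Phi^{(2g)}$ evaluating as $(v_1,v_2)\mapsto \beta^{(g)}(v_1,v_2)$. The divisibility of $N$ by $|Z|$ makes $\sum_v X_v^N$ invariant under the first $d_\phi$, and unwinding the remaining generators gives
\[
\mathcal{D}(\underline{x},\underline{y}) = q^{-gN}\sum_{v_1,v_2\in\F^g}\Bigl(\sum_{w_1,w_2\in\F^g} \exp\!\bigl(2\pi i[\beta(w_1,v_2)+\beta(w_2,v_1)+\beta(w_1,w_2)]\bigr)\, x_{w_1}y_{w_2}\Bigr)^N.
\]
Next I would expand the $N$-th power multinomially and pair against $f=\sum_m a_m \prod_v y_v^{m_v}$ using $(\prod y_v^{m_v},f)_g = \prod m_v!\,\overline{a_m}$. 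The character sum over $v_1$ forces $\sum_w m_w w = 0$, i.e.\ $m\in\mathcal{N}$. For types $2^E_{II}$ and $q^E_1$ the $\mathcal{C}_g$-invariance of $f$ independently imposes the same condition: the ``linear'' diagonal elements $d_\phi$ with $\phi(v)=\beta(v,a)\in\Phi^{(g)}$ transform each coefficient $a_m$ by the character $\exp(2\pi i\beta(\sum_w m_w w,a))$, so invariance forces $a_m=0$ unless $m\in\mathcal{N}$. After repackaging the multinomial sum one obtains
\[
(\mathcal{D},f)_g = \frac{N!\, q^g}{q^{gN}} \sum_{v_2\in\F^g} \bar{f}\bigl((L_{w+v_2}(\underline{x}))_w\bigr),\qquad L_u(\underline{x})=\sum_{w} \exp(2\pi i\beta(w,u))\, x_w.
\]

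The crucial step is to identify the right-hand side using $\mathcal{C}_g$-invariance of $\bar f$. Writing $L_u(\underline{x})=q^{g/2}\,x_u^h$ with $h=h_{1_g,1_g,1_g}\in\mathcal{C}_g$ the Fourier-transform generator and $\tilde{x}_u:=x_u^h$, one has
\[
\bar{f}\bigl((L_{w+v_2}(\underline{x}))_w\bigr) = q^{gN/2}\,\bar{f}\bigl((\tilde{x}_{w+v_2})_w\bigr).
\]
The substitution $\tilde{x}_u\mapsto \tilde{x}_{u+v_2}$ is the translation operator $T_{v_2}$, obtained as $h\, d_{\phi'}\, h^{-1}$ with $\phi'(u)=-\beta(u,v_2)\in\Phi^{(g)}$; since such $\phi'$ belongs to $\Phi^{(g)}$ for the types under consideration, $T_{v_2}\in\mathcal{C}_g$ and $\bar{f}$ is $T_{v_2}$-invariant. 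The sum over $v_2$ produces a factor of $q^g$, and a further application of $\bar{f}^h=\bar{f}$ gives $\bar{f}(\underline{\tilde{x}})=\bar{f}(\underline{x})$. Altogether
\[
(\mathcal{D},f)_g = q^{2g-gN/2}\, N!\, \bar{f}(\underline{x}),
\]
which is manifestly $\mathcal{C}_g$-invariant, so $\sum_{\xi}(\mathcal{D},f)_g^{\pi(\xi)}=|\mathfrak{U}_g|\,(\mathcal{D},f)_g$, giving the claimed identity.

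For type $q^E$ the same strategy works but is simpler: the Eisenstein series is built from $X_{(0,\dots,0)}^N$, so neither the outer $v_1$-character sum nor the $v_2$-translation averaging occurs; this removes the factor $q^{2g}$ and leads directly to $q^{-gN/2}\, N!\,|\mathfrak{U}_g|\, \bar{f}$. The main technical obstacle is verifying that the translation operators $T_{v_2}$ lie in $\mathcal{C}_g$ for types $2^E_{II}$ and $q^E_1$: this crucially uses that $\Phi$ contains ``linear'' forms $\phi(v)=\beta(v,a)$, which is exactly what fails for type $q^E$ and explains the need for a separate computation there. Keeping track of the normalisation through all four Fourier/character summations is the other delicate point.
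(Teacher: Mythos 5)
Your proposal is correct and follows essentially the same route as the paper: the same decomposition $D(E^{(g)})=\sum_{\xi\in\mathfrak{U}_g}\mathcal{D}^{\pi(\xi\times 1_{2g})}$, the same explicit formula for $\mathcal{D}$, and the same key inputs (the additive characters lying in $\ker\lambda^{(g)}\subset\mathcal{C}_g$, the Fourier element $h_{1_g,1_g,1_g}\in\mathcal{C}_g$, and the multinomial theorem), the only difference being that you expand multinomially first and then invoke invariance of $\bar{f}$, where the paper instead moves these unitary operators onto $f$ by adjointness before expanding. The one point you should justify explicitly is that $\bar{f}$ (not just $f$) is $\mathcal{C}_g$-invariant, which you use for the translations, for $h$, and for the final sum over $\xi$; the paper does this in one line by writing $f$ in the spanning set of weight enumerators, which have real coefficients.
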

\begin{proof}
Assume first that the cusp form and the Eisenstein series are related to type $2^E_{II}$ or to type $q^E_1$. We will make suitable adjustments for the type $q^E$ at the end of the proof.

The first part of the proof of Proposition \ref{prop:r<g} is valid also for $r=g$. In particular,
\[
D(E^{(g)}(\underline{X})) =D\left(\sum_{\xi\in\mathfrak{U}_g} \left(\sum_{v\in\F_q^{2g}} X_v^N\right)^{\pi(\boldsymbol{\tau}_g)\pi(\xi \times 1_{2g})}\right) = \sum_{\xi\in\mathfrak{U}_g}D\left( \left(\sum_{v\in\F_q^{2g}} X_v^N\right)^{\pi(\boldsymbol{\tau}_g)}\right)^{\pi(\xi \times 1_{2g})}
\]
where
\begin{align*}
& D\left(\left(\sum_{v\in\F_q^{2g}} X_v^N \right)^{\pi(\boldsymbol{\tau}_g)}\right)\\
& = q^{-Ng}\sum_{(v_1,v_2) \in \F_q^{2g}}\left(\sum_{w_1, w_2\in\F_q^{g} }\exp\left( 2\pi i (\beta^{(g)}( w_1,w_2)+\beta^{(g)}(w_1,v_2)+\beta^{(g)}(w_2,v_1))\right) x_{w_1}y_{w_2}\right)^N
\end{align*}
For a cusp form $f(\underline{y})$ we will compute
\[
\left( D(E^{(g)}),f\right)_g =\sum_{\xi\in\mathfrak{U}_g} \left( D\left(\left(\sum_{v\in\F_q^{2g}} X_v^N \right)^{\pi(\boldsymbol{\tau}_g)}\right) ,f
\right)_g^{\pi(\xi)} .
\]
First we rearrange the terms:
\begin{multline*}
\left( D\left(\left(\sum_{v\in\F_q^{2g}} X_v^N \right)^{\pi(\boldsymbol{\tau}_g)}\right) ,f
\right)_g\\
= q^{-Ng}\sum_{(v_1,v_2) \in \F_q^{2g}}\left(
\left(\sum_{w_1\in\F_q^{g} } \exp\left( 2\pi i \beta^{(g)}(w_1,v_2)\right) x_{w_1}
\right.\right.\\
\times \left.\left.
\sum_{w_2\in\F_q^{g} }\exp\left( 2\pi i (\beta^{(g)}( w_1,w_2)+\beta^{(g)}(w_2,v_1))\right) y_{w_2}\right)^N ,f\right)_g .
\end{multline*}
Note that for a fixed $v_1$, the function $\chi(w_2)=\exp\left( 2\pi i\beta^{(g)}(w_2,v_1)\right) y_{w_2}$ is an additive character, and thus it is an element of $\ker\lambda^{(g)}\subset\mathcal{C}_g$. Hence the above quantity is equal to
\begin{align*}
& q^{-Ng}\sum_{(v_1,v_2) \in \F_q^{2g}}\left(
\left(\sum_{w_1\in\F_q^{g} } \exp\left( 2\pi i \beta^{(g)}(w_1,v_2)\right) x_{w_1} 
\sum_{w_2\in\F_q^{g} }\exp\left( 2\pi i (\beta^{(g)}( w_1,w_2)\right) y_{w_2}\right)^N ,f^{\chi^*}\right)_g\\
& = q^{-Ng/2}\sum_{(v_1,v_2) \in \F_q^{2g}}\left(
\left(\sum_{w_1\in\F_q^{g} } \exp\left( 2\pi i \beta^{(g)}(w_1,v_2)\right) x_{w_1}
h_{1_{g},1_{g},1_{g}}(y_{w_1})\right)^N,f\right)_g\\
& = q^{-Ng/2}\sum_{(v_1,v_2) \in \F_q^{2g}}\left(
\left(\sum_{w_1\in\F_q^{g} } \exp\left( 2\pi i \beta^{(g)}(w_1,v_2)\right) x_{w_1}
y_{w_1}\right)^N,f\right)_g\\
& = q^{-Ng/2}\sum_{(v_1,v_2) \in \F_q^{2g}}\left(
\left(\sum_{w_1\in\F_q^{g} } x_{w_1}y_{w_1}\right)^N,f^{\chi^*}\right)_g\\
& = q^{2g-Ng/2}\left(\left(\sum_{w\in\F_q^{g} } x_{w}y_{w}\right)^N,f\right)_g
\end{align*}
because $f^{\chi^*}=f$ and the adjoint of the Fourier transform $h_{1_{g},1_{g},1_{g}}\in\mathcal{C}_g$ also belongs to $\mathcal{C}_g$ by which $f$ is invariant. Finally, using multinomial theorem, we obtain further
\[
q^{2g-Ng/2}\sum_{\substack{k_0,\ldots ,k_{q^g-1}\geq 0\\ k_0+\ldots +k_{q^g-1}=N}}\frac{N!}{k_0!\cdots k_{q^g-1}!}\left( \prod_{w\in\F_q^g} x_w^{k_w}y_w^{k_w},f\right)_g = q^{2g-Ng/2} N! \bar{f}(\underline{x}) ,
\]
where we identified the numbers $0, \ldots ,q^g-1$ with $w\in\F_q^g$ via their expansion in base $q$ (extended by a suitable number of zeros to be of length $g$).

In this way,
\[
\left( D(E^{(g)}),f\right)_g =q^{2g-Ng/2}N!\sum_{\xi\in\mathfrak{U}_g} (\bar{f})^{\pi(\xi)} = q^{2g-Ng/2} N!\, |\mathfrak{U}_g| \bar{f} .
\]
We note here that $\bar{f}$ is $\mathcal{C}_g$-invariant. Indeed, since the ring of $\mathcal{C}_g$-invariant polynomials is spanned by $g$-weight enumerators (see \cite[Corollary 5.7.6]{NRS06}) we have $f = \sum_{C} a_C \cwe_g(C)$ for some $a_C \in \mathbb{C}$. But then $\bar{f} = \sum_{C} \bar{a}_C \cwe_g(C)$, since $\cwe_g(C)$ have real coefficients. Hence $\bar{f}$ is also $\mathcal{C}_g$-invariant as linear combination of the weight enumerators. \newline 

The order of the group $\mathfrak{U}_g$ may be computed using the isomorphism \eqref{eq:iso of U}. The order of $\Sp_{2g}(\F)$ as well as the orders of other classical groups may be found for example in \cite{Wilson}.

The proof for the type $q^E$ requires only a small modification. As it was the case in the proof of Proposition \ref{prop:r<g}, it suffices to replace the sum $\sum_{v \in \F^{2g}} X_v$ with the single monomial $X_{(0,\ldots ,0)}$. This simplifies the expression for $D(E^{(g)})$ and gives
\[
\left( D(E^{(g)}),f\right)_g = q^{-Ng/2}\left(\left(\sum_{w\in\F_q^{g} } x_{w}y_{w}\right)^N,f\right)_g .
\]
The rest of the proof remains the same and leads to the hypothesis of the proposition.
\end{proof}

\begin{proof}[Proof of the Theorem]
A discussion at the beginning of section \ref{sec:dec Eis series} together with Propositions \ref{prop:r<g} and \ref{prop:r=g} yield the equality
\[
\left( D(E_{2g}),f\right)_g = \frac{1}{|\mathcal{P}_{2g}\backslash\mathcal{C}_{2g}|} \sum_{r=0}^g \left( E^{(r)},f\right)_g
= \frac{1}{|\mathcal{P}_{2g}\backslash\mathcal{C}_{2g}|} N! \, q^{c_{\mathcal{T}}-Ng/2} |\mathfrak{U}_{g}| \bar{f},
\]
where $|\mathfrak{U}_{g}|=q^{g^2+c_{\mathcal{T}}}\prod_{i=1}^{g}(q^{2i}-1)$, and $q^{c_{\mathcal{T}}}=|\ker\lambda^{(g)}|^2$. 
Because $|P_{2g}(\F_q)|=|\GL_{2g}(\F_q)|\cdot |\{ A\in M_{2g}(\F_q): \T{A}=A\}|$, we obtain
\begin{align*}
|\mathcal{P}_{2g}\backslash\mathcal{C}_{2g}| & = \frac{|\mathfrak{U}_{2g}|}{|\mathfrak{P}_{2g}|} = \frac{|\ker \lambda^{(2g)} |\cdot |\Sp_{4g}(\F_q)|}{|P_{2g}(\F_q)|} = |\ker \lambda^{(2g)} |\cdot \frac{q^{4g^2}\prod_{i=1}^{2g} (q^{2i}-1)}{q^{4g^2}\prod_{i=1}^{2g} (q^{i}-1)}\\
& = |\ker \lambda^{(2g)} |\prod_{i=1}^{2g} (q^{i}+1) .
\end{align*}
\end{proof}

The above proof shows that
\[
\left( D(E_{2g}),f\right)_g = N!\, q^{-Ng/2}|\ker(\lambda^{(g)})|^2\,\frac{|\mathfrak{U}_{g}|\cdot |\mathfrak{P}_{2g}|}{|\mathfrak{U}_{2g}|} \bar{f}.
\]
We believe that this is also the case for type $2_I^E$.

\begin{conj}\label{conj:type2I}
Let $f\in \C [y_v: v\in\F_2^g]_N$ be a cusp form of genus $g$ associated with type $2^E_{I}$. Then
\[
\left( D(E_{2g}),f\right)_g = N!\, 2^{2g-Ng/2}\frac{|\mathfrak{U}_{g}|\cdot |\mathfrak{P}_{2g}|}{|\mathfrak{U}_{2g}|} \bar{f},
\]
where
\[
\frac{|\mathfrak{U}_{g}|\cdot |\mathfrak{P}_{2g}|}{|\mathfrak{U}_{2g}|} =\frac{| O^+_{2g}(\F_2)|\cdot |P_{2g}\cap O^+_{4g}(\F_2)|}{ |O^+_{4g}(\F_2)|} 
= 2^{g^2-g}(2^g -1)\prod_{i=g}^{2g-1}(2^i+1)^{-1} .
\]
\end{conj}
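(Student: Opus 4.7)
The strategy is to run the same four-step argument that yields the Theorem for the other three types: double coset decomposition, reduction to the family $\boldsymbol{\tau}_r$, vanishing for $r<g$, and explicit computation at $r=g$. As the Remark following Lemma \ref{lem:double-coset-decomp} makes explicit, Propositions \ref{prop:r<g} and \ref{prop:r=g} and Lemma \ref{lem:decomposition} never use the symplectic nature of $\mathcal{G}_{2g}$: they depend only on translation invariance of $f$ inside $\ker\lambda^{(g)}\cong\F_2^g$, on cuspidality, and on the self-adjointness inside $\mathcal{C}_g$ of the Fourier transform $h_{1_g,1_g,1_g}$. Hence, once the orthogonal analogue of Shimura's decomposition is in hand, everything else should transfer verbatim.

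Concretely, the missing input is a set of representatives for
\[
P_{2g}^+\backslash O^+_{4g}(\F_2) / \bigl(O^+_{2g}(\F_2)\times O^+_{2g}(\F_2)\bigr),
\]
where $P_{2g}^+$ is the stabilizer of a maximal totally isotropic subspace in $O^+_{4g}(\F_2)$ and the embedded product realizes the orthogonal direct sum of two $2g$-dimensional hyperbolic spaces $V_1\perp V_2$. My first step would be to prove, by a Witt-type argument over $\F_2$, that the orbits on the variety of Lagrangians biject with an integer $r\in\{0,\ldots,g\}$ recording the dimension $\dim_{\F_2}(W\cap V_2)$ for a Lagrangian $W$, and that each orbit is represented by the same matrix $\tau_r$ as in \cite[Lemma 4.2]{Shimura}, after checking directly that $\tau_r\in O^+_{4g}(\F_2)$ via its Dickson invariant. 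I would then lift these to elements $\boldsymbol{\tau}_r$ of the hyperbolic co-unitary group $\mathfrak{U}(M_{2g}(\F_2),\Phi^{(2g)})$ associated with type $2_I^E$ by the identical diagram chase of Lemma \ref{lem:double-coset-decomp}; the compatibilities \eqref{eq:pi-compatibility}--\eqref{eq:j-compatibility} are structural and insensitive to whether $\mathcal{G}_{2g}$ is symplectic or orthogonal.

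With $\boldsymbol{\tau}_r$ available, Lemma \ref{lem:decomposition} and the identification \eqref{eq:Clifford-decomposition-to-gens} carry over unchanged, and the proofs of Propositions \ref{prop:r<g} and \ref{prop:r=g} apply word for word. This furnishes
\[
\bigl( D(E_{2g}),f\bigr)_g = \frac{1}{|\mathcal{P}_{2g}\backslash\mathcal{C}_{2g}|}\,2^{2g-Ng/2}\,N!\,|\mathfrak{U}_g|\,\bar{f},
\]
and the stated constant drops out from $|\mathcal{P}_{2g}\backslash\mathcal{C}_{2g}| = |\mathfrak{U}_{2g}|/|\mathfrak{P}_{2g}|$ combined with the standard order formula for $|O^+_{2n}(\F_2)|$. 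The main obstacle is the first step: Witt's theorem for orthogonal groups over $\F_2$ is more delicate than its symplectic counterpart because of the presence of the Dickson invariant, so one has to rule out the possibility that a symplectic orbit of $\Sp_{4g}(\F_2)$ splits into two $O^+_{2g}\times O^+_{2g}$-orbits, or conversely that some $\tau_r$ fails to preserve the quadratic form of plus type. The low-genus numerical confirmations in section \ref{sec:computation} strongly suggest that no extra cosets appear and that the conjectured constant is correct.
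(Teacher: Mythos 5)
The statement you are asked to prove is labelled a \emph{conjecture} in the paper, and the paper offers no proof of it --- only numerical verification for $N=16$ and $g\in\{1,2\}$ in section \ref{sec:computation}. Your proposal correctly diagnoses why: the entire argument for the other three types rests on Lemma \ref{lem:double-coset-decomp}, whose input is Shimura's explicit double coset decomposition of $P_{2g}\backslash \Sp_{4g}/(\Sp_{2g}\times\Sp_{2g})$, and for type $2^E_I$ the group $\mathcal{G}_{2g}$ is $O^+_{4g}(\F_2)$ rather than $\Sp_{4g}(\F_2)$. This is exactly the missing ingredient the authors name in the Remark after Lemma \ref{lem:double-coset-decomp} and again in section \ref{sec:computation}. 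Your plan for the remaining steps (lifting representatives via the diagram chase, the vanishing for $r<g$ from cuspidality and the generalised $\Phi$-operators, the $r=g$ computation via the Fourier transform $h_{1_g,1_g,1_g}$ and the additive characters in $\ker\lambda^{(g)}\cong\F_2^g$, and the final bookkeeping with $|\mathcal{P}_{2g}\backslash\mathcal{C}_{2g}|=|\mathfrak{U}_{2g}|/|\mathfrak{P}_{2g}|$) is consistent with how the paper proves the Theorem for the other types, and the constant you would obtain matches the conjectured one.

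However, the proposal is not a proof: the orthogonal double coset decomposition is asserted as a programme, not established. This is a genuine gap, and it is the nontrivial one. Note in particular that the relevant parabolic $P_{2g}\cap O^+_{4g}(\F_2)$ stabilises a maximal \emph{totally singular} subspace for the quadratic form, not merely a Lagrangian for its polarisation, so the orbit analysis is not simply a restriction of the symplectic one: over $\F_2$ the variety of maximal totally singular subspaces is a proper subvariety of the Lagrangian Grassmannian, the group $O^+_{2g}(\F_2)\times O^+_{2g}(\F_2)$ is much smaller than $\Sp_{2g}(\F_2)\times\Sp_{2g}(\F_2)$, and one must both verify that each $\tau_r$ preserves the plus-type quadratic form and rule out that a single symplectic orbit breaks into several orthogonal ones (or that some value of $r$ is not attained). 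You acknowledge all of this as ``the main obstacle,'' but acknowledging an obstacle is not overcoming it. Until that Witt-type classification is actually carried out, the argument establishes nothing beyond what the paper already states, namely that the first part of the proof of Lemma \ref{lem:double-coset-decomp} transfers and that the numerics for small $g$ support the conjectured constant.
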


Here we are using (see \cite{Wilson}) that 
$$|O^+_{2g}(\F_2)| = 2^{g^2-g+1} (2^g -1)\prod_{i=1}^{g-1}(4^i-1),$$
and
$$|P_{2g}\cap O^+_{4g}(\F_2)| = 2^{2g(2g-1)} \prod_{i=1}^{2g}(2^i-1).$$

\subsection{Computational evidence for the codes of type $2^E_{I}$}\label{sec:computation}

The results presented in sections \ref{sec:r<g} and \ref{sec:r=g} concern the codes of type $2^E_{II}$, $q^E$, $q_I^E$, but not of type $2^E_{I}$. This is because in this case the Clifford-Weil group is related to the orthogonal group $O^+(\F_2)$ for which we do not know the double coset decomposition used in the proof of Lemma \ref{lem:double-coset-decomp}. Most probably this is just a technical difficulty. In this section we would like to discuss computational results related to codes of type $2^E_{I}$ and length $N=16$ which were carried out with the help of SageMath. The calculations were performed for $g\in\{ 1,2\}$.

The starting point for our calculations is an alternative formula for the Eisenstein series:
\begin{equation}\label{eq:Siegel-Weil 2_II}
E_{2g}(\underline{X})=\prod_{1\leq i\leq \frac{N}{2}-1} (2^{2g}+2^i)^{-1}\sum_{C}\cwe_{2g}(C,\underline{X}) ,
\end{equation}
where the sum is over all self-dual codes $C$ of type $2_I^E$ and length $N$ (see \cite[Theorem 4.10]{NRS01}). Due to the identity \eqref{eq:restriction}, for $f\in\C[y_{v}: v\in \F^{g}]_N$ we have 
\[
\left( D(E_{2g}(\underline{X})),f\right)_g
= \prod_{1\leq i\leq \frac{N}{2}-1} (2^{2g}+2^i)^{-1}\sum_{C}\cwe_{g}(C,\underline{x})\left( \cwe_{g}(C,\underline{y}),f(\underline{y})\right)_g .
\]
Observe also that if a code $C'$ may be obtained from a code $C$ by permuting the entries of all the vectors, i.e. $C$ and $C'$ are permutation-equivalent, then $\cwe_{g}(C)=\cwe_{g}(C')$ for every genus $g$. Hence,
\begin{equation}\label{eq:inner product 1}
\left( D(E_{2g}),f\right)_g
= N!\prod_{1\leq i\leq \frac{N}{2}-1} (2^{2g}+2^i)^{-1}\sum_{C}\frac{1}{|\mathrm{Aut}(C)|}\cwe_{g}(C)\left( \cwe_{g}(C),f\right)_g ,
\end{equation}
where $\mathrm{Aut}(C)=\{\sigma\in\mathrm{S}_N:\sigma C=C\}$ is the automorphism group of the code $C$ and the sum is over all permutation-nonequivalent self-dual codes of type $2_I^E$ and length $N$.

The number of permutation-nonequivalent self-dual codes of type $2_I^E$ and length $N\leq 34$ is given in \cite[Table 12.1, column (f)]{NRS06}; there, in chapter 12.2 one can also read about their classification. The classification (of a larger group of codes) for $N\leq 20$ was carried out by Pless in \cite{Pless}. In \cite{Pless} one can also find the sizes of the automorphism groups and formulas for the weight enumerators of genus $1$. We collected them in Table \ref{table:N=16}. The first column of the table contains the name of a code (taken from \cite[Table 12.7]{NRS06}); the first three are of length $16$, and the next four are obtained by taking direct sums of codes of smaller length with the repetition code $i_2=\{ 00,11\}$. The last column lists the sizes of the automorphism groups, and the columns in the middle contain information on the coefficients of the genus-$1$ complete weight enumerators, for example:
\[
\cwe_1(E_{16})(x_0,x_1)=x_0^{16}+x_1^{16} +28\left( x_0^{12}x_1^4 + x_0^{4}x_1^{12}\right) + 198x_0^8x_1^8
\]
The suggestive names for the consecutive terms of the weight enumerators are taken from \cite{Runge96}. In case of genus $1$ the tuple $(a,b)$ denotes a sum $x_0^ax_1^b + x_0^bx_1^a$ (if $a\neq b$) or a monomial $x_0^ax_1^b$ (if $a= b$), and $(a)=(a,0)$.

\begin{table}[ht] 
\centering
\bgroup\def\arraystretch{1.2} 
\begin{tabular}{ccccccccc}
 $C$ & & (16) & (14,2)	& (12,4) & (10,6) & (8,8) &	& $\mid\!\mathrm{Aut}(C)\!\mid$ \\
\hline
$E_{16}$ & & 1 & 0	& 28 & 0 & 198 & & 5160960 \\
$F_{16}$ & & 1 & 0	& 12 & 64 & 102 & & 73728 \\
$A_8^2$ & & 1 & 0 & 28 & 0 & 198 & & 3612672 \\
$D_{14}\oplus i_2$ & & 1 & 1 & 14 & 63 & 98 & & 112896 \\
$B_{12}\oplus i_2^2$ & & 1 & 2 & 16 & 62 & 94 & & 184320 \\
$A_8\oplus i_2^4$ & & 1 & 4 & 20 & 60 & 86 & & 516096 \\
$i_2^8$ & & 1 & 8 & 28 & 56 & 70 & & 10321920 \\
\\
\end{tabular}\egroup
\caption{A list of all inequivalent codes of length $16$ of type $2^E_{I}$ together with the sizes of their automorphism groups and the coefficients of genus-$1$ complete weight enumerators.}\label{table:N=16}
\end{table}

It is known that the space of cusp forms of length $16$ of genus $1$ is $2$-dimensional. As basis elements we choose:
\[
f_1=\frac{1}{16}\left(\cwe_1(E_{16})-\cwe_1(F_{16})\right) = (12,4) -4 (10,6) +6 (8,8)
\]
\[
f_2=\frac18\left(\cwe_1(E_{16})-\cwe_1(i_2^8)\right) = -(14,2) -7(10,6) + 16(8,8)
\]
With this data the formula \eqref{eq:inner product 1} gives:
\[
\left( D(E_{2}),f_1\right)_1 = c f_1 ,\qquad \left( D(E_{2}),f_2\right)_1 = c f_2
\]
where
\[
c=16!\prod_{1\leq i\leq 7} (4+2^i)^{-1}\cdot 19388160 = \frac{16!}{2^6\cdot 3} .
\]
Note that the value of the constant $c$ agrees with the value predicted by Conjecture \ref{conj:type2I}, and we note here that $f_1$ and $f_2$ have real coefficients. As we show below, the same happens when we take $g=2$.

In order to find a cusp form $f$ of genus $2$ and compute $\left( D(E_{4}),f\right)_2$ we computed the coefficients of the genus-$2$ complete weight enumerators. It's clear from the definition that the first coefficients are precisely the same as in genus $1$; the remaining are listed in Table \ref{table:N=16,g=2}. That is, for example,
\begin{align*}
\cwe_2(E_{16})(x_0,x_1) = &\, (16) +28(12,4) + 198(8,8)\\
& + 420(8,4,4) +336(10,2,2,2) + 4704(6,6,2,2) +	29400(4,4,4,4) .
\end{align*}
As before, we omit zeros in tuples, and the tuple $(a_0,a_1,a_2,a_3)$ denotes the sum of all\footnote{This is no longer the case for genus $g\geq 3$, cf. \cite{Runge96}.} possible, pair-wise different monomials of the form $x_{v_0}^{a_0}x_{v_1}^{a_1}x_{v_2}^{a_2}x_{v_3}^{a_3}$ for distinct $v_0,v_1,v_2,v_3\in\{ 00,01,10,11\}$. In particular, now in genus $2$:
\[
(16) = x_{00}^{16} + x_{01}^{16} + x_{10}^{16} + x_{11}^{16} 
\]
is a different polynomial than in genus $1$.

{\footnotesize 
\begin{table}[ht] 
\centering
\bgroup\def\arraystretch{1.2}
\begin{tabular}{ccccccccccc}
 $C$ &  (12,2,2) & (10,4,2)	& (8,6,2) & (8,4,4)	& (6,6,4) & (8,4,2,2) &	(10,2,2,2) & 
 (6,6,2,2) & (6,2,4,4) & (4,4,4,4) \\
\hline
$E_{16}$ &  0 & 0 & 0 & 420 & 0 & 0 & 336 & 4704 &	0 &	29400 \\
$F_{16}$ &  0 & 0 & 0 & 84 & 192 & 576 & 48 & 1056 & 3264 & 8088 \\
$A_8^2$ &  0 & 0 & 0 & 420 & 0 & 0 & 336 & 4704 &	0 &	29400 \\
$D_{14}\oplus i_2$  & 0 & 14 &	49 & 98 & 196 & 672 & 84 & 1176 & 3038 & 7056 \\
$B_{12}\oplus i_2^2$  & 2 & 30 & 94 & 120 & 212 & 750 & 120 & 1264 & 2820 & 6120 \\
$A_8\oplus i_2^4$ &  12 & 68 & 172 & 188 & 280 & 852 & 192 & 1344 & 2408 & 4536 \\
$i_2^8$  & 56 & 168 & 280 & 420 & 560 & 840 & 336 & 1120 & 1680 & 2520 \\
\\
\end{tabular}\egroup
\caption{A list of all inequivalent codes of length $16$ of type $2^E_{I}$ together with the coefficients of genus-$2$ complete weight enumerators. The rest of the coefficients is given in Table \ref{table:N=16}.}\label{table:N=16,g=2}
\end{table}}

The subspace of cusp forms of genus $2$ is $1$-dimensional. As a generator we can choose
\begin{align*}
f & = (A_8\oplus i_2^4 + F_{16} - 2B_{12}\oplus i_2^2 )/8\\
& = (12,2,2)+ (10,4,2) -2(8,6,2) + 4(8,4,4) + 6(6,6,4) - 9(8,4,2,2)\\
&\hspace{0.5cm} - 16(6,6,2,2) + 4(6,2,4,4) + 48(4,4,4,4).
\end{align*}
Then
\[
\left( D(E_{4}),f\right)_2 = 16!\prod_{1\leq i\leq 7} (16+2^i)^{-1}\cdot 9953280\, f = \frac{16!}{2^{10}\cdot 3\cdot 5}\, f
\]
which agrees with the conjectural value and here we have $\bar{f}=f$.

\begin{rem}
We carried out analogous calculations for the codes of type $2_{II}^E$, length $N=24$ and $g\in\{ 1,2\}$. The resulting constant agrees with the constant obtained in the Theorem.
\end{rem}

\section{Application of the main theorem to the ``basis problem''}\label{sec:basis problem}

In this last section we present an application of the Theorem to the ``basis problem''. Namely, we write a formula which expresses any cusp form (in the sense of section \ref{sec:cusp forms}) as a linear combination of complete weight enumerators of codes (the analogues of ``theta series''). This formula is inspired by the one given by B\"ocherer in \cite[Satz 22]{B83}. In order to achieve this we need a Siegel-Weil type formula, which connects our Siegel-type Eisenstein series with the weight enumerators of codes. Such formulas are known for the types $2_I^E$ (see equation \eqref{eq:Siegel-Weil 2_II}), $2_{II}^E$ and $p_1^E$ (see Theorems 6.3 and 7.2 in \cite{NRS01}). In particular,

\begin{enumerate}
\item for type $2_{II}^E$:
$$E_g(\underline{x})=\prod_{0\leq i<\frac{N}{2}-1} (2^g+2^i)^{-1}\sum_{C}\cwe_g(C,\underline{x})$$
where the sum is over all doubly-even self-dual codes $C$ of type $2_{II}^E$ and length $N$;
\item for type $p_1^E$:
$$E_g(\underline{x})=\prod_{0\leq i<\frac{N}{2}-1}(p^g+p^i)^{-1}\sum_{C}\cwe_g(C,\underline{x})$$
where the sum is over all self-dual codes $C$ of type $p_1^E$ and length $N$.
\end{enumerate}

\begin{cor}
Let $f$ be a cusp form of genus $g$ and length $N$ associated with a type $2^E_{II}$ or $p_1^E$. Then:
\[
f = \frac{b}{cN!} \sum_C \left(f, \cwe_g(C)\right)_g \cwe_g(C),
\]
where the sum is over all codes $C$ of length $N$ and type $2^E_{II}$ or $p_1^E$, $c$ is the constant from the Theorem, and 
\[
b=\begin{cases}
\prod_{0\leq i<\frac{N}{2}-1} (2^{2g}+2^i)^{-1} , & \mbox{for type } 2^E_{II}\\
\prod_{0\leq i<\frac{N}{2}-1}(p^{2g}+p^i)^{-1}, & \mbox{for type } p^E_{1} .
\end{cases}
\]
\end{cor}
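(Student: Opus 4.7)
The plan is to derive the formula as a direct consequence of the main Theorem together with the Siegel-Weil type formula for $E_{2g}$ recalled in the two displayed items immediately above the corollary. Concretely, for the types under consideration one has
\[
E_{2g}(\underline{X}) = b \sum_C \cwe_{2g}(C, \underline{X}),
\]
with $b$ equal to the appropriate product appearing in the statement, and the sum taken over all codes of the given type and length $N$.

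First, I would apply the doubling map $D$ to both sides of the Siegel-Weil formula. By the identity \eqref{eq:restriction}, the right-hand side becomes $b \sum_C \cwe_g(C,\underline{x})\, \cwe_g(C,\underline{y})$. Then, taking the Hermitian inner product $(\,\cdot\,, f(\underline{y}))_g$ in the $\underline{y}$-variable, and using that this inner product is linear in its first argument, I would obtain
\[
\left(D(E_{2g})(\underline{x},\underline{y}),\, f(\underline{y})\right)_g \;=\; b \sum_C \left(\cwe_g(C,\underline{y}),\, f(\underline{y})\right)_g \cwe_g(C, \underline{x}).
\]
On the other hand, the main Theorem evaluates the left-hand side as $cN!\,\bar{f}(\underline{x})$, so that
\[
cN!\,\bar{f}(\underline{x}) \;=\; b \sum_C \left(\cwe_g(C), f\right)_g \cwe_g(C, \underline{x}).
\]

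The final step is to take complex conjugation of both sides. Since the coefficients of $\cwe_g(C)$ are nonnegative integers, $\overline{\cwe_g(C, \underline{x})} = \cwe_g(C, \underline{x})$; and by the Hermitian property of the inner product, $\overline{\left(\cwe_g(C), f\right)_g} = \left(f, \cwe_g(C)\right)_g$. Dividing by $cN!$ then yields the claimed formula. No substantive obstacle is expected here: the corollary is essentially a bookkeeping consequence of the main Theorem combined with the Siegel-Weil formula, once one is careful about the placement of the complex conjugation.
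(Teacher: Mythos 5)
Your proposal is correct and follows essentially the same route as the paper's own proof: restrict the Siegel--Weil identity via $D$, pair with $f$ in the $\underline{y}$-variable, invoke the main Theorem, and undo the complex conjugation using the hermitian property and the realness of the weight enumerators' coefficients. In fact you spell out the final conjugation step slightly more explicitly than the paper does.
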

\begin{proof}
The Siegel-Weil type formulas for $E_{2g}$ may be abbreviated to the form
\[ E_{2g}(\underline{X})=b \sum_{C}\cwe_{2g}(C,\underline{X}),\]
where $b$ is as above. By the identity \eqref{eq:restriction},
\[ D(E_{2g})(\underline{x},\underline{y})=b \sum_{C}\cwe_g(C,\underline{x})\,\cwe_g(C,\underline{y}),\]
and thus for a cusp form $f(\underline{y})$ we have
\[
\left( D(E_g(\underline{x},\underline{y}), f(\underline{y})\right)_g = b \sum_C \left( \cwe_g(C,\underline{y}),f(\underline{y})\right)_g \cwe_g(C,\underline{x}).
\]
On the other hand, by the Theorem, $\left( D(E_g(\underline{x},\underline{y}), f(\underline{y})\right)_g =cN!\, \bar{f}(\underline{x})$. The formula then follows from the fact that the inner product is hermitian and the weight enumerators have real coefficients.
\end{proof}

\begin{rem}
If our Theorem can be extended to cover type $2_I^E$ (see discussion in section \ref{sec:computation}), then the above corollary holds also in this case with $b=\prod_{1\leq i\leq \frac{N}{2}-1} (2^{2g}+2^i)^{-1}$.
\end{rem}

\section*{Appendix: The doubling method in the theory of automorphic forms}  

This section is addressed to researchers in coding theory who wish to know some basic facts of the original doubling method, as it appears in the theory of automorphic forms, which was our motivation and inspiration for this paper. Of course, we will not present the method in its full generality but rather restrict ourselves to some very concrete cases (Siegel modular forms, holomorphic case, classical setting, etc). For more details we refer the reader to the book of Shimura \cite{Shimura_book} for a presentation in the classical language and to the original monograph of Piatetski-Shapiro and Rallis \cite{GP-SR} for the automorphic representation approach.\newline

We start with recalling some basic facts regarding Siegel modular forms. For more details we refer the reader to \cite{123}. For a fixed positive integer $g$, called degree or genus, the Siegel's upper half space of degree $g$ is
\[
\mathbb{H}_g = \{ Z \in M_g(\mathbb{C}): Z= \T{Z},\; \Im(Z) > 0 \} ,
\]
where by $\Im(Z) > 0$ we mean that the imaginary part of the complex matrix $Z$ is positive definite. On this space there is a transitive action of the symplectic group $\Sp_g(\mathbb{R})$ given by
\[
\gamma Z := (AZ+B)(CZ+D)^{-1},
\]
where we write $\gamma = \(\begin{smallmatrix}
    A & B \\ C & D 
    \end{smallmatrix}\) \in \Sp_g(\mathbb{R})$ with $A,B,C,D \in M_g(\mathbb{R})$. 

A Siegel modular form of  degree $g$, integer weight $k$ and level $\Gamma =\Gamma_g := \Sp_g(\mathbb{Z})$ is a holomorphic function $F$ on $\mathbb{H}_g$ which is invariant under the action of all $\gamma =\(\begin{smallmatrix}
    A & B \\ C & D 
    \end{smallmatrix}\)\in \Gamma$:
\[
F|_k\gamma (Z) := \det(CZ+D)^{-k} F(\gamma Z)= F(Z) .
\]
If $g=1$ one needs to impose an additional condition on the behavior of $F$ at infinity (the cusp of $\Gamma$). However if $g > 1$, then this condition is implied by holomorphy of $F$ (Koecher principle).

An example of a Siegel modular form of degree $g$ and even weight $k > g+1$ is an Eisenstein series of Siegel type
\[
E^{(g)}_k(Z) =\sum_{\gamma \in P\cap \Gamma \backslash \Gamma} 1|_k\gamma (Z) = \sum_{\(\begin{smallmatrix}
    A & B \\ C & D 
    \end{smallmatrix}\) \in P\cap \Gamma \backslash \Gamma} \det(CZ+D)^{-k}
\]
where $P \subset \Sp_{g}(\mathbb{R})$ is the Siegel parabolic subgroup consisting of matrices with lower left block (i.e. $C$) equal to the zero matrix, and $1$ is the constant function equal to $1$.

Another very interesting Siegel modular forms are the cuspidal ones. Given a Siegel modular form $F$ of degree $g$, one can obtain a Siegel modular form of degree $g-1$ by applying Siegel's $\Phi$-operator:
\[
(\Phi F)(Z) = \lim_{y \rightarrow \infty} F\left( \begin{pmatrix} Z & 0 \\ 0 & iy\end{pmatrix}\right) ,\qquad Z \in \mathbb{H}_{g-1} .
\]
If $\Phi(F) = 0$, then $F$ is called a cusp form. For two Siegel modular forms $F$ and $G$ of degree $g$ and weight $k$, one of which is a cusp form, we define
\[
<F,G> = \int_{\Gamma \setminus \mathbb{H}_g} F(Z) \overline{G(Z)} (\det\Im (Z))^{k} d^{*}Z
\]
where $d^{*}Z$ is a $\Gamma$-invariant volume element of $\mathbb{H}_g$.

We are now ready to present an idea of the doubling method from the classical point of view. Given two copies of $\mathbb{H}_g$ we define an embedding
\[
\Delta : \mathbb{H}_g \times \mathbb{H}_g \rightarrow \mathbb{H}_{2g},\qquad (Z_1,Z_2) \mapsto \begin{pmatrix} Z_1 & 0 \\ 0 & Z_2\end{pmatrix} .
\]
The embedding $\Delta$ is compatible with an embedding 
\[
\imath : \Sp_g(\mathbb{R}) \times \Sp_g(\mathbb{R}) \rightarrow \Sp_{2g}(\mathbb{R}),\qquad (\begin{pmatrix} A_1 & B_1 \\ C_1 & D_1 \end{pmatrix} , \begin{pmatrix} A_2 & B_2 \\ C_2 & D_2 \end{pmatrix} )\mapsto \begin{pmatrix}
    A_1 & 0 & B_1 & 0 \\
    0 & A_2 & 0 & B_2\\
    C_1 & 0 & D_1 & 0\\
    0 & C_2 & 0 & D_2
\end{pmatrix} 
\]
in the sense that $\imath(\gamma_1,\gamma_2) \Delta(Z_1,Z_2) = \Delta (\gamma_1 Z_1, \gamma Z_2)$.

Doubling method amounts to obtaining an identity of the form
\begin{equation}\label{eq:doubling identity}
<E^{(2g)}_k \left( \begin{pmatrix} Z_1 & 0 \\ 0 & Z_2
\end{pmatrix}\right) , F(Z_1)> = \lambda(F) \overline{F(-\overline{Z_2})},
\end{equation}
where $\lambda(F)$ is a constant, $E^{(2g)}_k$ is a Siegel-type Eisenstein series\footnote{In practice, this Eisenstein series depends also on a complex variable $s$.} of genus $2g$, weight $k$ and level $\Gamma$, and $F$ is a cusp form of genus $g$, weight $k$ and level $\Gamma$, which is an eigenfunction of the so-called Hecke operators. (It is known that a finite subset of such eigenfunctions comprises a basis of cuspidal Siegel modular forms of genus $g$ of given weight and level.) In order to do this one proceeds as follows:
\begin{enumerate}
\item[1.] Find a double coset decomposition $\Gamma_{2g} =\bigcup_{r} (P\cap \Gamma_{2g} )\,\tau_r\, \iota(\Gamma_g\times \Gamma_g)$.
\item[2.] Use step $1.$ to write $E^{(2g)}_k = \sum_r E_r$ and compute $<E_r \left( \begin{smallmatrix} Z_1 & 0 \\ 0 & Z_2 \end{smallmatrix}\right) , F(Z_1)> $ separately for each $r$. The cuspidality of $F$ should force all but one of these inner products to be zero. 
\item[3.] The assumption that $F$ is a Hecke eigenform should imply that what was obtained in step $2.$ leads to the doubling identity \eqref{eq:doubling identity}.
\end{enumerate}
As a result of step $3.$ we obtain a constant $ \lambda(F)$ which depends on the eigenvalues of $F$. This number is of great significance in the theory of Siegel modular forms since it is related to a value of the standard $L$-function attached to $F$. The identity \eqref{eq:doubling identity} and its generalizations have been also used to derive analytic and algebraic properties of the aforementioned $L$-function. \newline

\noindent \textbf{Acknowledgments:} The first named author would like to thank R. Gaffney and R. Psyroukis for helpful discussions and the Faculty of Mathematics and Computer Science of Adam Mickiewicz University for hospitality in September 2024. The second named author would like to thank W. Gajda for helpful discussions on matrix groups.

\end{document}